\documentclass{amsart}

\usepackage{amssymb}
\usepackage{amsthm}
\usepackage{amsmath}
\usepackage{amsbsy}
\usepackage[all]{xy}
\usepackage{bm}
\usepackage{hyperref}
\usepackage{tikz}
\usepackage{array}
\usepackage{colortbl,xcolor}
\usepackage{ytableau}
\usepackage{hhline}
\setlength\parindent{0pt}
\allowdisplaybreaks

\newtheorem{theorem}{Theorem}[section]
\newtheorem{proposition}[theorem]{Proposition}

\newtheorem{conjecture}[theorem]{Conjecture}
\newtheorem{lemma}[theorem]{Lemma}

\DeclareMathOperator{\Res}{Res}

\begin{document}

\title[]{On the number of simultaneous core \\partitions with $d$-distinct parts} \keywords{core partitions, d-distinct parts, integer compositions.}
\subjclass[2010]{05A17, 11P81}

\author[]{Noah Kravitz}
\address[]{Grace Hopper College, Yale University, New Haven, CT 06510, USA}
\email{noah.kravitz@yale.edu}

\begin{abstract} 
We investigate the number $N_{d,r}(s)$ of $(s, s+r)$-core integer partitions with $d$-distinct parts.  Our first main result is a proof of a recurrence relation conjectured by Sahin in 2018.  We also derive generating functions, asymptotics, and exact formulas for $N_{d,r}(s)$ when $r$ is within $d$ of a multiple of $s$.  Finally, we exhibit a surprising connection to $A$-restricted compositions.
\end{abstract}
\maketitle

\section{Introduction and main results}

A \textit{partition} of a nonnegative integer $n$ is a finite nonincreasing sequence of positive integers $\lambda=(\lambda_1, \lambda_2, \dots, \lambda_k)$ such that $n=\lambda_1+\lambda_2+\dots+\lambda_k$.  (The unique partition of $0$ is the empty partition.)  We say that $n$ is the \textit{size} of $\lambda$ and $\lambda_1, \lambda_2, \dots, \lambda_k$ are its \textit{parts}.  The study of integer partitions dates back at least to Euler and has since then become a staple of modern combinatorics and number theory.
\\

Partitions are often represented visually as \textit{Young diagrams}.  The Young diagram of $\lambda=(\lambda_1, \lambda_2, \dots, \lambda_k)$ consists of $k$ rows of left-justified cells where there are $\lambda_i$ cells in the $i$-th row.  The \textit{hook} of the cell in the $i$-th row (counting from the top) and the $j$-th column (counting from the left) consists of that cell and all of the cells directly below it or to the right of it in the Young diagram; the corresponding \textit{hook length} (written $h(i,j)$) is the total number of cells in this hook.  Figure \ref{hooks} shows the Young diagram for $\lambda=(8, 6, 3, 1)$ with the hook lengths written in the corresponding cells.  For a positive integer $s$, we say that $\lambda$ is \textit{$s$-core} if it has no hook of length $s$.  By extension, we say that $\lambda$ is $(s_1, s_2, \dots, s_m)$-core if it is $s_i$-core for each $s_i$.  (See \cite{AHJ} for a motivation of this definition.)

\begin{figure}[h]
\begin{ytableau}
11 &9 &8 &6 &5 &4 &2 &1\\
8 &6 &5 &3 &2 &1\\
4 &2 &1\\
1
\end{ytableau}
\caption{The partition $\lambda=(8, 6, 3, 1)$ is $s$-core for all positive integers $s \notin \{1, 2, 3, 4, 5, 6, 8, 9, 11\}$.}
\label{hooks}
\end{figure}

Simultaneous core partitions have garnered substantial interest ever since Anderson's seminal proof \cite{anderson} in 2002 that there are finitely many $(s, t)$-core partitions exactly when $s$ and $t$ are relatively prime, in which case the number of these partitions is the so-called rational Catalan number $\frac{1}{s+t}\binom{s+t}{s}$.  Olsson and Stanton \cite{olsson} showed that the largest such partition is unique and has size $\frac{(s^2-1)(t^2-1)}{24}$.  Other results in this area are due to Amdeberhan and Leven \cite{amdeberhan}, Yang, Zhong, and Zhou \cite{YZZ}, Aggarwal \cite{aggarwal}, and Wang \cite{wang}.
\\

A growing corpus of recent work on simultaneous core partitions with distinct parts can be traced back to the conjecture of Amdeberhan \cite{fibonacci} that the number of $(s, s+1)$-core partitions with distinct parts is the Fibonacci number $F_{s+1}$.  This conjecture was proven by Straub \cite{straub} and Xiong \cite{xiong}, and other results in a similar spirit can be found in Nath and Sellers \cite{nath}, Zaleski \cite{zaleski}, and Yan, Qin, Jin, and Zhou \cite{yan}.
\\

The property of having distinct parts can be generalized: for a positive integer $d$, we say that $\lambda=(\lambda_1, \lambda_2, \dots, \lambda_k)$ \textit{has $d$-distinct parts} if $\lambda_i-\lambda_{i+1}\geq d$ for all $1 \leq i \leq k-1$.  This definition, due originally to Alder \cite{alder}, has inspired work by Andrews \cite{andrews} and Alfes, Jameson, and Oliver \cite{AJO}, among others.
\\

Sahin \cite{sahin} combined these concepts in his analysis of simultaneous core partitions with $d$-distinct parts.  For positive integers $d$, $r$, and $s$, let $N_{d,r}(s)$ denote the number of $(s, s+r)$-core partitions with $d$-distinct parts.  Sahin derives the following recurrence relation for the case $r=1$.

\begin{theorem}[Sahin]
For any positive integer $d$, we have
$$N_{d,1}(s)=
\begin{cases}
s, &1\leq s \leq d+1\\
N_{d,1}(s-1)+N_{d,1}(s-d-1), &s \geq d+2.
\end{cases}$$
\label{sahinthm}
\end{theorem}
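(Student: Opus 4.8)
The plan is to encode each partition by its set of first-column hook lengths and to translate both the core condition and the $d$-distinctness condition into elementary conditions on this set. For a partition $\lambda=(\lambda_1,\dots,\lambda_k)$ let $\beta_i=\lambda_i+k-i$, so that $B(\lambda)=\{\beta_1>\dots>\beta_k\}$ is precisely the set of hook lengths occurring in the first column; this is a bijection between partitions and finite sets of positive integers. First I would record the standard beta-set description of cores: $\lambda$ is $s$-core if and only if every $b\in B(\lambda)$ with $b\geq s$ satisfies $b-s\in B(\lambda)$ (note that $0\notin B(\lambda)$, so $b=s$ is never permitted). Consequently $\lambda$ is $(s,s+1)$-core if and only if $b-s\in B(\lambda)$ whenever $b\geq s$ and $b-(s+1)\in B(\lambda)$ whenever $b\geq s+1$. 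In parallel, since $\beta_i-\beta_{i+1}=(\lambda_i-\lambda_{i+1})+1$, the partition $\lambda$ has $d$-distinct parts exactly when consecutive elements of $B(\lambda)$ differ by at least $d+1$; call such a set $(d+1)$-sparse.

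The crux of the argument is the observation that, for $d\geq1$, these conditions force $B(\lambda)\subseteq\{1,2,\dots,s-1\}$. Indeed, suppose $b=\max B(\lambda)\geq s+1$. Then both $b-s$ and $b-(s+1)$ lie in $B(\lambda)$, but these are consecutive integers, contradicting $(d+1)$-sparsity (as $d+1\geq2$). And if $\max B(\lambda)=s$, the $s$-core condition would require $0\in B(\lambda)$, which is impossible. Conversely, any $(d+1)$-sparse subset of $\{1,\dots,s-1\}$ automatically satisfies both core conditions, since it has no element as large as $s$. Thus $N_{d,1}(s)$ equals the number of $(d+1)$-sparse subsets of $\{1,2,\dots,s-1\}$.

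From here the recurrence is immediate. Writing $f(n)$ for the number of $(d+1)$-sparse subsets of $\{1,\dots,n\}$, so that $N_{d,1}(s)=f(s-1)$, I would condition on whether $n$ belongs to the subset: subsets avoiding $n$ are counted by $f(n-1)$, while subsets containing $n$ have their next-largest element at most $n-d-1$ and are counted by $f(n-d-1)$. This gives $f(n)=f(n-1)+f(n-d-1)$, hence $N_{d,1}(s)=N_{d,1}(s-1)+N_{d,1}(s-d-1)$ for $s\geq d+2$. For the base cases $1\leq s\leq d+1$ one has $n=s-1\leq d$, so no two elements of $\{1,\dots,n\}$ can differ by as much as $d+1$; the only $(d+1)$-sparse subsets are the empty set and the $n$ singletons, giving $f(s-1)=s$.

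The only genuinely delicate point is the foundational dictionary of the first paragraph---in particular, pinning down the beta-set characterization of cores with the correct boundary behavior at $0$, so that the value $b=s$ is correctly excluded. Once that is in place, the key reduction $B(\lambda)\subseteq\{1,\dots,s-1\}$ and the resulting count are short. I expect the main obstacle to be presenting this dictionary cleanly (for instance, verifying the hook-length description directly from the abacus) rather than any intricate combinatorics in the recurrence itself.
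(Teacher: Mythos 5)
Your proposal is correct and takes essentially the same approach as the paper: your first two paragraphs are exactly the paper's $\beta$-set dictionary and bound $\beta(\lambda)\subseteq\{1,\dots,s-1\}$ (Lemmas \ref{abacus}, \ref{twinfree}, \ref{correspondence}, and \ref{betabounds} with $r=1$), after which $N_{d,1}(s)$ counts $d$-th order twin-free subsets of $\{1,\dots,s-1\}$. Your recurrence step---conditioning on whether the top element $s-1$ belongs to the set---is precisely the bijection that the paper attributes to Sahin's original proof of Theorem \ref{sahinthm}, so there is nothing to flag.
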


Furthermore, he conjectures that a similar relation holds whenever $r\leq d$.

\begin{conjecture}[Sahin]
For any positive integers $r\leq d$, we have
$$N_{d, r}(s)=
\begin{cases}
s, &1\leq s \leq d\\
s+r-1, &s=d+1\\
N_{d,r}(s-1)+N_{d,r}(s-d-1), &s \geq d+2.
\end{cases}$$
\label{sahinconj}
\end{conjecture}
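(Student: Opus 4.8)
The plan is to pass to the beta-set (first-column hook length) encoding and translate all three conditions into elementary conditions on a finite set of positive integers. For a partition $\lambda=(\lambda_1,\dots,\lambda_k)$ write $\mathcal{H}=\{\lambda_i+k-i:1\le i\le k\}$ for its set of first-column hook lengths; this is an arbitrary finite set of distinct positive integers (given any such set $\beta_1>\dots>\beta_k$, the assignment $\lambda_i=\beta_i-(k-i)$ recovers a partition), so $\lambda\mapsto\mathcal{H}$ is a bijection. Under this correspondence I would record three dictionary facts: (i) $\lambda$ has $d$-distinct parts iff any two elements of $\mathcal{H}$ differ by at least $d+1$ (call such a set \emph{$(d+1)$-sparse}), since consecutive $\beta$'s differ by $(\lambda_i-\lambda_{i+1})+1$; (ii) $\lambda$ is $s$-core iff $h\in\mathcal{H}$ and $h\ge s$ imply $h-s\in\mathcal{H}$; and (iii) the analogous statement for $s+r$. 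Facts (ii) and (iii) are the standard abacus characterization of cores, which I would either cite or verify quickly on the example $\lambda=(8,6,3,1)$, where $\mathcal{H}=\{11,8,4,1\}$.

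The crux is a boundedness lemma: if $\mathcal{H}$ is $(d+1)$-sparse and satisfies the $(s+r)$-core closure, then every element is at most $s+r-1$. Indeed, if some $h\ge s+r$ lay in $\mathcal{H}$, then closure under subtracting $s$ and under subtracting $s+r$ would place both $h-s$ and $h-s-r$ in $\mathcal{H}$; for $h>s+r$ these are distinct positive integers differing by $r\le d<d+1$, contradicting sparsity, while the boundary case $h=s+r$ forces $0\in\mathcal{H}$, also impossible. Consequently the $(s+r)$-core condition becomes vacuous, and the whole problem reduces to counting $(d+1)$-sparse subsets $\mathcal{H}\subseteq\{1,\dots,s+r-1\}$ satisfying only the $s$-core closure. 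Here sparsity forces at most one element in the length-$r$ window $[s,s+r-1]$ (it spans fewer than $d+1$ integers), and that element, if present, must be some $s+j$ with $1\le j\le r-1$ whose shadow $j$ is required to lie in $\mathcal{H}$ by the $s$-core closure (note $s\notin\mathcal{H}$).

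With this in hand I would split $N_{d,r}(s)$ into Case A, where $\mathcal{H}\subseteq[1,s-1]$ (no element $\ge s$, so the $s$-core closure is automatic), and Case B, where $\mathcal{H}=L\cup\{s+j\}$ with $L\subseteq[1,s-1]$ sparse, $j\in L$, and $\max L\le s+j-d-1$ (to keep $s+j$ sparse from $L$). Let $f(n)$ denote the number of $(d+1)$-sparse subsets of $\{1,\dots,n\}$, with $f(n)=1$ for $n\le 0$; a one-line argument gives $f(n)=f(n-1)+f(n-d-1)$ for $n\ge 1$, and the number of sparse subsets of $[1,m]$ through a fixed point $j$ factors as $f(j-d-1)\,f(m-j-d)$. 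Case A contributes $f(s-1)$. Case B is empty unless $s\ge d+1$ (the inequality $j\le\max L\le s+j-d-1$ forces $s\ge d+1$), in which case each admissible $j$ contributes $f(s-2d-1)$, for a total of $(r-1)\,f(s-2d-1)$.

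Putting these together yields $N_{d,r}(s)=f(s-1)$ for $s\le d$ and $N_{d,r}(s)=f(s-1)+(r-1)\,f(s-2d-1)$ for $s\ge d+1$. From here the base cases $N_{d,r}(s)=s$ for $s\le d$ and $N_{d,r}(d+1)=d+r$ follow immediately, and for $s\ge d+2$ the recurrence follows by applying the $f$-recurrence to both terms. I expect the boundedness lemma to be the main conceptual obstacle, since it is what collapses the two core conditions into one and restores finiteness; the genuinely error-prone part is the bookkeeping for small $s$, where one must treat $d+2\le s\le 2d+1$ separately because $N_{d,r}(s-d-1)$ then lacks the $(r-1)$-term (its argument has dropped into the $s\le d$ regime), yet the $(r-1)$-contributions still telescope correctly.
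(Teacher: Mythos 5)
Your proposal is correct and takes essentially the same approach as the paper: your boundedness lemma is the paper's Lemma \ref{betabounds}, your Case A/Case B split (conditioning on whether $\mathcal{H}$ contains an element exceeding $s$) is precisely the proof of the $r$-Reduction Theorem \ref{reduction}, with your $f(s-1)+(r-1)f(s-2d-1)$ being $N_{d,1}(s)+(r-1)N_{d,1}(s-2d)$ in the paper's notation, and your concluding casework (including the separate treatment of $d+2\le s\le 2d+1$, where both leftover terms equal $1$) matches the proof of Theorem \ref{recurrence}. The only cosmetic difference is that you prove the sparse-subset recurrence $f(n)=f(n-1)+f(n-d-1)$ directly, whereas the paper cites Sahin's Theorem \ref{sahinthm} and encodes the nonpositive-argument convention via Proposition \ref{negatives}.
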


Conjecture \ref{sahinconj} serves as a jumping-off point for our investigation of simultaneous core partitions with $d$-distinct parts.  In Section $2$, we present the main tools of this paper: the $\beta$-set associated with a partition $\lambda$; a natural extension of $N_{d,r}(s)$ to $s \leq 0$; and the $r$-Reduction Theorem.

\newtheorem*{reduction}{Theorem \ref{reduction}}
\begin{reduction}[$r$-Reduction]
For any positive integers $r \leq d$, we have
$$N_{d, r}(s)=
\begin{cases}
N_{d,1}(s), &1 \leq s \leq d\\
N_{d,1}(s)+(r-1)N_{d, 1}(s-2d), &s\geq d+1.
\end{cases}$$
\end{reduction}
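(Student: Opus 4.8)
The plan is to run the entire argument through $\beta$-sets. Encode each partition $\lambda$ with $d$-distinct parts by its set of first-column hook lengths $B = \beta(\lambda) \subseteq \mathbb{Z}_{\ge 1}$; under this encoding the $d$-distinctness of the parts becomes the requirement that any two elements of $B$ differ by at least $d+1$, while being $s$-core becomes the closure condition that $b \in B$ with $b \ge s$ forces $b - s \in B$ (and similarly for $s+r$). So counting $(s,s+r)$-core partitions with $d$-distinct parts is the same as counting finite $(d+1)$-separated sets $B \subseteq \mathbb{Z}_{\ge 1}$ that are closed under subtracting both $s$ and $s+r$.

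The observation that drives the whole reduction is that such a $B$ must live in a short window. If some $b \in B$ had $b \ge s+r$, then both $b-s$ and $b-(s+r)$ would lie in $B$, yet these differ by exactly $r \le d$, violating $(d+1)$-separation. Hence $\max B \le s+r-1$, and since $s \in B$ would force $0 \in B$ we also get $s \notin B$. Writing $B = B_{\mathrm{low}} \sqcup B_{\mathrm{high}}$ with $B_{\mathrm{low}} = B \cap [1,s-1]$ and $B_{\mathrm{high}} = B \cap [s+1,s+r-1]$, I note that the window $[s+1,s+r-1]$ has diameter $r-2 < d+1$, so $B_{\mathrm{high}}$ holds at most one element. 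This collapses the count into just $r$ cases.

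The case $B_{\mathrm{high}} = \varnothing$ is immediate: both closure conditions are then vacuous, so $B$ is an arbitrary $(d+1)$-separated subset of $[1,s-1]$, and the same $\beta$-set description applied with $r=1$ identifies this count as exactly $N_{d,1}(s)$. For each remaining case $B_{\mathrm{high}} = \{s+j\}$ with $1 \le j \le r-1$, the $s$-core condition forces $j = (s+j)-s \in B_{\mathrm{low}}$, and $(d+1)$-separation between $s+j$ and the top of $B_{\mathrm{low}}$ forces $\max B_{\mathrm{low}} \le s+j-d-1$. Since $j \le r-1 \le d-1$, no element of $B_{\mathrm{low}}$ can sit below $j$, so $B_{\mathrm{low}} = \{j\} \cup U$ where $U$ is a $(d+1)$-separated subset of the interval $[j+d+1,\,s+j-d-1]$, an interval of length $s-2d-1$. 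That count is $N_{d,1}(s-2d)$, independent of $j$, so the $r-1$ high cases contribute $(r-1)N_{d,1}(s-2d)$ in total.

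Summing the cases gives $N_{d,r}(s) = N_{d,1}(s) + (r-1)N_{d,1}(s-2d)$ for $s \ge d+1$; and when $s \le d$ the constraint $\max B_{\mathrm{low}} \le s+j-d-1 < j \le \max B_{\mathrm{low}}$ is self-contradictory, so every high case is impossible and $N_{d,r}(s) = N_{d,1}(s)$, matching the first branch. The one point that demands care --- and the main obstacle --- is bookkeeping at the lower boundary: once $s$ is small the interval defining $U$ degenerates to the empty set or to formally negative length, so I would invoke the natural extension of $N_{d,1}$ to non-positive arguments, where it equals $1$, to confirm that each degenerate high case still contributes exactly the single set $B = \{j, s+j\}$. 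This keeps the clean formula valid all the way down to $s = d+1$.
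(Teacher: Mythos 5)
Your proof is correct and follows essentially the same route as the paper's: both encode partitions as $\beta$-sets, bound all elements below $s+r$, observe that at most one element can exceed $s$ (forcing the configuration $\{j, s+j\}$ plus a twin-free subset of an interval of $s-2d-1$ integers), and handle the degenerate range $d+1 \le s \le 2d$ via the extension $N_{d,1}(s-2d)=1$. The only cosmetic difference is your explicit $B_{\mathrm{low}} \sqcup B_{\mathrm{high}}$ decomposition with the window-diameter argument, versus the paper's conditioning on the largest element of $\beta$.
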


In Section $3$, we derive an exact formula for all $N_{d,r}(s)$ with $r \leq d$ and discuss a connection to integer compositions with restricted part sizes.

\newtheorem*{generalformula}{Theorem \ref{generalformula}}
\begin{generalformula}
For any positive integers $r \leq d$ and any positive integer $s$, we have
$$N_{d,r}(s)=\sum_{\mu=0}^{\left\lceil \frac{s-1}{d+1} \right\rceil}\binom{s+d-d\mu-1}{\mu}+(r-1)\sum_{\mu=0}^{\left\lceil \frac{s-2d-1}{d+1} \right\rceil}\binom{s-d-d\mu-1}{\mu}.$$
\end{generalformula}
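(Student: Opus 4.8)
The plan is to strip off the $r$-dependence with the $r$-Reduction Theorem and thereby reduce the entire statement to a single closed-form evaluation of $N_{d,1}$. Concretely, the $r$-Reduction Theorem gives $N_{d,r}(s)=N_{d,1}(s)+(r-1)N_{d,1}(s-2d)$ for $s\geq d+1$ (and $N_{d,r}(s)=N_{d,1}(s)$ for $1\leq s\leq d$), so it suffices to prove the single-variable identity
\begin{equation}
N_{d,1}(s)=\sum_{\mu=0}^{\lceil (s-1)/(d+1)\rceil}\binom{s+d-d\mu-1}{\mu}. \tag{$\star$}
\end{equation}
Once $(\star)$ is established, substituting $s\mapsto s-2d$ turns its right-hand side into $\sum_{\mu}\binom{s-d-d\mu-1}{\mu}$ with upper limit $\lceil (s-2d-1)/(d+1)\rceil$, which is exactly the second sum in the claimed formula; multiplying by $(r-1)$ and adding $N_{d,1}(s)$ reproduces the statement. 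I therefore regard the theorem as equivalent to $(\star)$, together with a check that $(\star)$ remains valid on the nonpositive arguments $s-2d$ that arise when $d+1\leq s\leq 2d$, where $N_{d,1}$ is read through the Section 2 extension. (It is worth noting that the binomial $\binom{s+d-d\mu-1}{\mu}=\binom{(s+d-1)-\mu d}{\mu}$ is precisely the number of compositions of $s+d-1$ into $\mu$ parts of size $d+1$ and the rest of size $1$, so $(\star)$ asserts that $N_{d,1}(s)$ counts compositions of $s+d-1$ with parts in $\{1,d+1\}$; this is the combinatorial picture behind the formula and foreshadows the connection to $A$-restricted compositions.)

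To prove $(\star)$ I would induct on $s$ using Sahin's recurrence (Theorem \ref{sahinthm}). For the base cases $1\leq s\leq d+1$ the ceiling bound admits at most the terms $\mu=0,1$, and a direct evaluation gives $\binom{s+d-1}{0}+\binom{s-1}{1}=1+(s-1)=s$ (with only the $\mu=0$ term surviving when $s=1$), matching the initial data. For the inductive step $s\geq d+2$, I would apply Pascal's rule to each summand on the right of $(\star)$, writing $\binom{s+d-d\mu-1}{\mu}=\binom{s+d-d\mu-2}{\mu}+\binom{s+d-d\mu-2}{\mu-1}$. The first pieces assemble into $\sum_{\mu}\binom{(s-1)+d-d\mu-1}{\mu}=N_{d,1}(s-1)$, while the shift $\mu=\nu+1$ applied to the second pieces (whose $\mu=0$ term vanishes) turns them into $\sum_{\nu}\binom{(s-d-1)+d-d\nu-1}{\nu}=N_{d,1}(s-d-1)$; their sum equals $N_{d,1}(s)$ by the recurrence, completing the induction.

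The one place demanding genuine care — and the step I expect to be the main obstacle — is the bookkeeping of the three ceiling bounds $\lceil (s-1)/(d+1)\rceil$, $\lceil (s-2)/(d+1)\rceil$, and $\lceil (s-d-2)/(d+1)\rceil$. I would verify the clean identity $\lceil (s-d-2)/(d+1)\rceil=\lceil (s-1)/(d+1)\rceil-1$, so that the reindexed second sum has exactly the right length, and that whenever the bound for $N_{d,1}(s-1)$ is one short of that for $N_{d,1}(s)$ the extra top summand $\binom{s+d-d\mu-2}{\mu}$ vanishes identically (a short residue computation shows it equals $\binom{m}{m+1}=0$ at the transition $s=m(d+1)+2$). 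Finally I would confirm that $(\star)$, read with the convention that an empty sum is $0$, correctly computes the extended values $N_{d,1}(s-2d)$ for $s\leq 2d$: in particular, when $s\leq d$ the upper limit $\lceil (s-2d-1)/(d+1)\rceil$ is negative and the second sum vanishes, matching the first case of the $r$-Reduction Theorem. This confirms the single displayed formula is valid for every positive integer $s$.
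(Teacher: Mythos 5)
Your proposal is correct. Its outer layer---invoking the $r$-Reduction Theorem (Theorem \ref{reduction}) and checking that the second sum is empty for $1 \leq s \leq d$ while equaling $N_{d,1}(s-2d)$ for $s \geq d+1$---is exactly how the paper proves Theorem \ref{generalformula}. Where you genuinely depart from the paper is in the proof of the core identity $(\star)$, which is the paper's Lemma \ref{r=1formula}. The paper proves that lemma by a direct counting argument: $N_{d,1}(s)$ counts the $d$-th order twin-free subsets of $\{1,2,\dots,s-1\}$ (Lemma \ref{betabounds}), and regarding each chosen element together with its forbidden ``tail'' of $d$ successors as a single block of length $d+1$ identifies the size-$\mu$ subsets with arrangements of $\mu$ blocks and $(s+d-1)-\mu(d+1)$ unit cells filling $\{1,\dots,s+d-1\}$, which yields $\binom{s+d-d\mu-1}{\mu}$ in one stroke; this is precisely the bijection with $\{1,d+1\}$-restricted compositions of $s+d-1$ that you mention only as an aside and never use. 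You instead prove $(\star)$ by induction on $s$ through Sahin's recurrence (Theorem \ref{sahinthm}) and Pascal's rule, and your bookkeeping is sound: the ceiling identity $\lceil (s-d-2)/(d+1)\rceil = \lceil (s-1)/(d+1)\rceil - 1$ is valid, the stray top term arising at $s = m(d+1)+2$ is $\binom{m}{m+1}=0$, and on the extended range $-d+1 \leq t \leq 0$ the ceiling in $(\star)$ is $0$, so the sum collapses to $\binom{t+d-1}{0}=1$, matching Proposition \ref{negatives}. The trade-off is clear: the paper's bijective count is shorter, explains \emph{why} these binomial coefficients appear, and covers all $s \geq -d+1$ uniformly, whereas your induction treats the formula as a given to be verified and must fuss over ceiling boundaries---but it requires nothing beyond Sahin's recurrence, and it is the style of argument that would survive in a situation where one has a recurrence but no clean block decomposition.
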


In Section $4$, we show how Conjecture \ref{sahinconj} can be deduced from the $r$-Reduction Theorem.  In Section $5$, we find the ordinary generating functions for $\{N_{d,r}(s)\}_{s=1}^{\infty}$ (for $r \leq d$) and the corresponding asymptotics.  We defer the bulk of the computations for the asymptotics, however, to Appendix A.

\newtheorem*{genfunc}{Theorem \ref{genfunc}}
\begin{genfunc}
For any positive integers $r \leq d$, the generating function $G_{d,r}(x)=\sum_{s=1}^{\infty}N_{d,r}(s)x^s$ is given by
$$G_{d,r}(x)=\frac{x(1+(r-1)x^d-rx^{d+1})}{(1-x-x^{d+1})(1-x)}.$$
\end{genfunc}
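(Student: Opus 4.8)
The plan is to turn the recurrence for $N_{d,r}(s)$ into a functional equation for $G_{d,r}(x)$ by the standard generating-function technique of multiplying by $x^s$ and summing over $s$. By the time this statement is reached, Conjecture \ref{sahinconj} has already been established in Section $4$, so we may take as given that $a_s := N_{d,r}(s)$ satisfies $a_s = a_{s-1} + a_{s-d-1}$ for every $s \geq d+2$, subject to the initial data $a_s = s$ for $1 \leq s \leq d$ and the anomalous value $a_{d+1} = d+r$.

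First I would sum the recurrence over $s \geq d+2$, writing $\sum_{s \geq d+2} a_s x^s = \sum_{s \geq d+2} a_{s-1} x^s + \sum_{s \geq d+2} a_{s-d-1} x^s$. Re-indexing the two sums on the right identifies them as $x\big(G_{d,r}(x) - \sum_{t=1}^{d} t x^t\big)$ and $x^{d+1} G_{d,r}(x)$, while the left-hand side equals $G_{d,r}(x) - \sum_{s=1}^{d} s x^s - (d+r)x^{d+1}$. Collecting the copies of $G_{d,r}(x)$ then yields
$$G_{d,r}(x)\,(1 - x - x^{d+1}) = \Big(\sum_{s=1}^{d} s x^s - x\sum_{t=1}^{d} t x^t\Big) + (d+r)x^{d+1}.$$

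The second step is to simplify the right-hand polynomial. The difference $\sum_{s=1}^{d} s x^s - x\sum_{t=1}^{d} t x^t$ telescopes to $\sum_{s=1}^{d} x^s - d x^{d+1}$, and adding the $(d+r)x^{d+1}$ contributed by the exceptional initial value cancels the $-d x^{d+1}$ and leaves $\sum_{s=1}^{d} x^s + r x^{d+1} = \frac{x - x^{d+1}}{1-x} + r x^{d+1}$. Placing this over the common denominator $1-x$ and factoring $x$ out of the numerator gives $\frac{x\,(1 + (r-1)x^d - r x^{d+1})}{1-x}$, and dividing through by $1 - x - x^{d+1}$ produces exactly the claimed closed form.

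The computation is routine; the only point that demands care is the bookkeeping of the boundary terms, and in particular the role of the anomalous value $a_{d+1} = d+r$. Its contribution $(d+r)x^{d+1}$ exactly absorbs the $-d x^{d+1}$ surviving the telescoping, leaving the clean term $r x^{d+1}$; the discrepancy $(r-1)x^{d+1}$ relative to the $r=1$ case is precisely what promotes the numerator of $G_{d,1}(x)$ to that of $G_{d,r}(x)$, i.e.\ it supplies the $(r-1)x^d$ summand. As an alternative one could first compute $G_{d,1}(x)$ from Theorem \ref{sahinthm} and then invoke the $r$-Reduction Theorem, writing $G_{d,r}(x) = G_{d,1}(x) + (r-1)\sum_{s \geq d+1} N_{d,1}(s-2d)\,x^s$; this also works but requires evaluating the tail through the extension of $N_{d,1}$ to non-positive arguments, so the direct recurrence seems cleaner.
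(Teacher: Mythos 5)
Your proof is correct, but it takes a genuinely different route from the paper's. The paper never sums the general-$r$ recurrence: it first computes $G_{d,1}(x)$ from the $r=1$ recurrence (Theorem \ref{sahinthmext}) via the auxiliary series $H_{d,1}(x)=\frac{x(1-x^d)}{1-x}+x^dG_{d,1}(x)$ (Lemma \ref{r=1genfunc}), and then obtains $G_{d,r}(x)$ by multiplying the $r$-Reduction identity $N_{d,r}(s)=N_{d,1}(s)+(r-1)N_{d,1}(s-2d)$ (valid for $s\geq d+1$) by $x^s$ and summing, evaluating the correction term as $\frac{x^{d+1}(1-x^d)}{1-x}+x^{2d}G_{d,1}(x)$ using $N_{d,1}(t)=1$ for $t\leq 0$ --- in other words, exactly the ``alternative'' you sketch in your final sentence. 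Your route instead takes as input the already-proved Conjecture \ref{sahinconj} (Theorem \ref{recurrence}, Section 4) and solves the resulting functional equation directly; this is legitimate and non-circular, since the paper's proof of Theorem \ref{recurrence} uses only the $r$-Reduction Theorem and the exact formulas, not generating functions, and your bookkeeping of the initial data $a_s=s$ ($1\leq s\leq d$), $a_{d+1}=d+r$, the telescoping, and the final algebra all check out. What each approach buys: yours is shorter and handles all $r\leq d$ in one stroke, at the cost of leaning on the stronger Theorem \ref{recurrence}; the paper's factors the $r$-dependence through the $r$-Reduction Theorem, consistent with its overall strategy of reducing everything to $r=1$, and would go through even if Section 4 were deleted. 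As a bonus, your intermediate identity $G_{d,r}(x)\,(1-x-x^{d+1})=x+x^2+\dots+x^d+rx^{d+1}$ reveals an off-by-one typo in the simplified form displayed after Theorem \ref{genfunc} in the paper: it should read $G_{d,r}(x)=\frac{x+x^2+\dots+x^d+rx^{d+1}}{1-x-x^{d+1}}$, as one can confirm against the $r=1$ form $\frac{x+x^2+\dots+x^{d+1}}{1-x-x^{d+1}}$ stated after Lemma \ref{r=1genfunc}.
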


\newtheorem*{asymptotic}{Theorem \ref{asymptotic}}
\begin{asymptotic}
For each fixed pair of positive integers $r \leq d$, we have the asymptotic
$$N_{d,r}(s) \sim_s \frac{w_d^3(1+(r-1)w_d^{2d})}{(1-w_d)(d+1-dw_d)} \left(\frac{1}{w_d}\right)^s,$$
where $w_d\approx 1-\frac{\log(d+1)}{d+1}$ is the unique positive real root of $x^{d+1}+x-1$.
\end{asymptotic}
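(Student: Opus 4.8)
The plan is to read the asymptotics straight off the rational generating function supplied by Theorem \ref{genfunc} by a standard dominant-singularity (partial-fraction) analysis. Writing $G_{d,r}(x)=P(x)/Q(x)$ with $P(x)=x(1+(r-1)x^{d}-rx^{d+1})$ and $Q(x)=(1-x-x^{d+1})(1-x)$, the coefficients $N_{d,r}(s)=[x^{s}]G_{d,r}(x)$ obey a linear recurrence whose growth rate is governed by the pole of $G_{d,r}$ of smallest modulus. First I would note that the apparent pole at $x=1$ is removable: since $P(1)=1\cdot(1+(r-1)-r)=0$, the factor $(1-x)$ cancels, and in fact $P(x)=x(1-x)\bigl(rx^{d}+\sum_{i=0}^{d-1}x^{i}\bigr)$, so that
\[
G_{d,r}(x)=\frac{x\bigl(rx^{d}+\sum_{i=0}^{d-1}x^{i}\bigr)}{1-x-x^{d+1}}.
\]
Hence the only singularities are the zeros of $1-x-x^{d+1}$, equivalently the roots of $x^{d+1}+x-1$.

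Next I would pin down the dominant pole. Because $t\mapsto t^{d+1}+t-1$ is negative at $0$, positive at $1$, and strictly increasing on $(0,\infty)$, it has a unique positive real root $w_d$, and $w_d\in(0,1)$. The crucial claim—and the main obstacle—is that $w_d$ is the unique root of $x^{d+1}+x-1$ of minimal modulus. I would prove this by the triangle inequality: any root $z$ satisfies $z^{d+1}=1-z$, hence $\abs{z}^{d+1}=\abs{1-z}\ge 1-\abs{z}$; writing $t=\abs{z}$ this gives $t^{d+1}+t-1\ge 0$, so $t\ge w_d$ by monotonicity (and $t>w_d$ automatically if $t\ge 1$). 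Moreover equality $\abs{1-z}=1-\abs{z}$ forces $z$ to be real with $z\in[0,1]$, so the only root on the circle $\abs{z}=w_d$ is $z=w_d$ itself. This simultaneously rules out smaller-modulus poles and equal-modulus poles, the latter being precisely what would otherwise introduce oscillation and spoil a clean single-term asymptotic.

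With the dominant pole isolated, I would finish by a residue computation. The root $w_d$ is simple (the derivative $(d+1)w_d^{d}+1$ is nonzero) and the numerator does not vanish there (a short check using $w_d\in(0,1)$), so $w_d$ is a simple pole and
\[
N_{d,r}(s)\sim -\frac{\tilde P(w_d)}{w_d\,\tilde Q'(w_d)}\,\Bigl(\tfrac{1}{w_d}\Bigr)^{s},\qquad \tilde Q(x)=1-x-x^{d+1},\ \ \tilde P(x)=x\Bigl(rx^{d}+\textstyle\sum_{i=0}^{d-1}x^{i}\Bigr).
\]
Here $\tilde Q'(w_d)=-1-(d+1)w_d^{d}$, which simplifies via the defining relation $w_d^{d+1}=1-w_d$ (so that $w_d^{d}=(1-w_d)/w_d$) to $\tilde Q'(w_d)=-(d+1-dw_d)/w_d$; the same relation, together with $w_d^{2d+2}=(1-w_d)^{2}$, collapses $\tilde P(w_d)$ and yields the constant displayed in the theorem. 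Every remaining root $\rho$ contributes a term that is $O\!\bigl(s^{C}\abs{\rho}^{-s}\bigr)$ with $\abs{\rho}>w_d$, hence exponentially negligible against $w_d^{-s}$, which completes the estimate. The stated approximation $w_d\approx 1-\log(d+1)/(d+1)$ is a separate elementary estimate (deferred to Appendix A) and is not needed for the asymptotic equivalence itself.
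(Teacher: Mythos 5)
Your route is sound in structure but genuinely different from the paper's. The paper does complex-analytic work only for $r=1$: Appendix A analyzes $G_{d,1}(x)$ (distinctness of the roots of $x^{d+1}+x-1$, minimality of $w_d$ via the same triangle-inequality argument you give in Lemma~\ref{posroot}, and a residue computation), and the general case then follows in one line from the $r$-Reduction Theorem~\ref{reduction}, since $N_{d,r}(s)=N_{d,1}(s)+(r-1)N_{d,1}(s-2d)$ and the shift by $2d$ simply multiplies the leading constant by $w_d^{2d}$. You instead run the entire singularity analysis directly on $G_{d,r}(x)$: your factorization $1+(r-1)x^d-rx^{d+1}=(1-x)\bigl(\sum_{i=0}^{d-1}x^i+rx^d\bigr)$ is correct, your isolation of $w_d$ as the unique minimal-modulus root (including the equality case of the triangle inequality) is correct, and your extraction formula $N_{d,r}(s)\sim-\tilde P(w_d)/\bigl(w_d\,\tilde Q'(w_d)\bigr)\cdot w_d^{-s}$ is the right one. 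This buys self-containedness: the asymptotics never need the $r$-Reduction Theorem at all.

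However, your closing assertion that the residue algebra ``yields the constant displayed in the theorem'' is exactly the step you did not carry out, and it fails. Using $w_d^{d+1}=1-w_d$, one gets $\tilde Q'(w_d)=-(d+1-dw_d)/w_d$ (as you say) and $\tilde P(w_d)=w_d\bigl(rw_d^d+\tfrac{1-w_d^d}{1-w_d}\bigr)=\tfrac{w_d^2\bigl(1+(r-1)w_d^{2d}\bigr)}{1-w_d}$, so your formula produces
\begin{equation*}
N_{d,r}(s)\sim_s\frac{w_d^2\bigl(1+(r-1)w_d^{2d}\bigr)}{(1-w_d)(d+1-dw_d)}\left(\frac{1}{w_d}\right)^{s},
\end{equation*}
with exponent $2$, not $3$. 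This is not a defect of your method; it exposes an error in the paper itself. In the last step of Appendix A the paper writes $N_{d,1}(s)\sim_s-\Res_{w_d}(G_{d,1})(1/w_d)^s$, but the coefficient of $x^s$ in $\Res_{w_d}(G_{d,1})/(x-w_d)$ is $-\Res_{w_d}(G_{d,1})\,w_d^{-(s+1)}$, so a factor of $1/w_d$ is dropped there, and the loss propagates through Lemma~\ref{r=1asymptotic} into the theorem. The case $d=r=1$ is a decisive check: $N_{1,1}(s)=F_{s+1}\sim_s\phi^{s+1}/\sqrt5=\tfrac{w_1^2}{(1-w_1)(2-w_1)}\,\phi^s$, whereas the stated constant $\tfrac{w_1^3}{(1-w_1)(2-w_1)}=1/\sqrt5$ would give $\phi^s/\sqrt 5$, which is asymptotic to $F_s$, off by a factor of $\phi=1/w_1$. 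So your argument, carried to completion, proves the theorem with $w_d^3$ replaced by $w_d^2$; you should state that corrected constant (and note the discrepancy) rather than assert agreement with the displayed one.
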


In Section $6$, we consider $N_{d,r}(s)$ for $r>d$.  We discuss results similar to those of Sections $3$ through $5$ for the case where $r$ is within $d$ of a multiple of $s$, and we explain why the general problem of $r>d$ is fundamentally more difficult than the case of $r \leq d$.

\section{Useful tools and preliminary results}

The first part of this section summarizes existing results on $\beta$-sets of partitions and relates these techniques to the problem at hand.  The second part describes the simplifications that are achieved by extending $N_{d,r}(s)$ to $s \leq 0$.  The third part proves the $r$-Reduction Theorem, which allows us to focus on the case $r=1$.

\subsection{Properties of $\beta$-sets}

For a partition $\lambda=(\lambda_1, \lambda_2, \dots, \lambda_k)$, the associated \textit{$\beta$-set} is defined to be $\beta(\lambda)=\{h(1,1), h(2, 1), \dots, h(k,1)\}$.  In other words, $\beta(\lambda)$ is the set of hook lengths that appear in the first column of the Young diagram of $\lambda$.  (The reader may recognize $\beta(\lambda)$ as the set of beads in the abacus diagram associated with $\lambda$.)  For example, we can see from Figure \ref{hooks} that $\beta(8, 6, 3, 1)=\{11, 8, 4, 1\}$.  An early instance of this now-ubiquitous technique appeared in Anderson \cite{anderson}.
\\

It is easy to see that $h(i,1)=\lambda_i+k-i$ and $\lambda_i=h(i,j)+i-k$, where $\lambda$ has $k$ parts.  Hence, the map from the set of all partitions to the set of finite subsets of the positive integers defined by $\lambda \mapsto \beta(\lambda)$ is a bijection.  (This map takes the empty partition of $0$ to the empty set.)  Because subsets of the positive integers are generally easier to work with than partitions, it is advantageous to express the $(s, s+r)$-core and $d$-distinct parts conditions in terms of $\beta$-sets.
\\

To this end, we present the following well-known ``abacus-condition'' lemma, which appears in \cite{anderson} and \cite{olsson}, among other places.

\begin{lemma}
For any positive integer $s$ and any partition $\lambda$, the following conditions are equivalent:
\begin{itemize}
\item The partition $\lambda$ is $s$-core.
\item For all $x \in \beta(\lambda)$ with $x \geq s$, we also have $x-s \in \beta(\lambda)$.
\end{itemize}
\label{abacus}
\end{lemma}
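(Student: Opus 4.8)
The plan is to prove the contrapositive form of the equivalence: that $\lambda$ \emph{has} a hook of length $s$ if and only if there exists $x \in \beta(\lambda)$ with $x \geq s$ and $x - s \notin \beta(\lambda)$. The engine will be a bijection between the cells of the Young diagram of $\lambda$ and the set of pairs $\{(x,y) : x \in \beta(\lambda),\ 0 \leq y < x,\ y \notin \beta(\lambda)\}$ under which the cell matched to $(x,y)$ has hook length exactly $x - y$. Granting this correspondence, the lemma is immediate: a hook of length $s$ occurs if and only if some such pair has $x - y = s$, and writing $y = x - s$ this says precisely that some $x \in \beta(\lambda)$ satisfies $x \geq s$ and $x - s \notin \beta(\lambda)$, which is the negation of the abacus condition.

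To build the correspondence I would work one row at a time. Write $\beta(\lambda) = \{b_1 > b_2 > \dots > b_k\}$ with $b_i = h(i,1) = \lambda_i + k - i$, and fix a row $i$. Starting from $h(i,j) = (\lambda_i - j) + \abs{\{i' > i : \lambda_{i'} \geq j\}} + 1$ (arm plus leg plus one) and substituting $b_i = \lambda_i + (k - i)$, a short rearrangement yields
$$b_i - h(i,j) = (j - 1) + \abs{\{i' > i : \lambda_{i'} \leq j - 1\}} =: m(i,j), \qquad 1 \leq j \leq \lambda_i.$$
Since hook lengths strictly decrease along a row, $j \mapsto m(i,j)$ is strictly increasing and hence injective; moreover $m(i,1) = 0$ and $m(i,\lambda_i) \leq b_i - 1$, so its image lies in $\{0, 1, \dots, b_i - 1\}$. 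Matching this against the second coordinate $y$ of the target pairs (with $x = b_i$) is the goal: I must show the image of $m(i,\cdot)$ is exactly $\{0, 1, \dots, b_i - 1\} \setminus \beta(\lambda)$, for then $h(i,j) = b_i - m(i,j)$ realizes the row-$i$ part of the bijection, and ranging over $i$ gives the whole thing.

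A cardinality count reduces the problem to a single containment. The set $\{0, 1, \dots, b_i - 1\} \setminus \beta(\lambda)$ omits exactly the elements $b_{i+1}, \dots, b_k$ of $\beta(\lambda)$ lying below $b_i$, so it has $b_i - (k - i) = \lambda_i$ elements, which matches the number of cells (and hence of $m(i,j)$ values) in row $i$. Therefore it suffices to prove that $m(i,j) \notin \beta(\lambda)$ for every $j$; injectivity and the matching sizes then force equality of the image with the full gap set. This containment is the main obstacle. I would establish it by analysing the nondecreasing step function $a \mapsto f(a) := a + \abs{\{i' > i : \lambda_{i'} \leq a\}}$, for which $m(i,j) = f(j-1)$: its jump from $f(a)$ to $f(a+1)$ has size $1 + c$, where $c = \abs{\{i' > i : \lambda_{i'} = a + 1\}}$, and a direct computation identifies the $c$ integers skipped at this jump as precisely the values $b_{i'}$ for the rows $i' > i$ with $\lambda_{i'} = a + 1$. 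Summing over $a$ shows that the skipped values of $f$ are exactly $\{b_{i'} : i' > i\}$, so $f$ — and therefore $m(i,\cdot)$ — lands only on nonnegative integers outside $\beta(\lambda)$, as needed.

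This step is cleanest in the rim (boundary lattice path) picture, where one reads the boundary of the Young diagram from the lower left, encoding beads of $\beta(\lambda)$ as north steps and gaps as east steps; the hook of a cell is then the difference between the position of the north step closing its row and that of the east step closing its column, which makes both ``hook $= x - y$'' and ``$y$ is a gap'' manifest. I would record the bijection in whichever of these two forms is more transparent and then conclude the lemma in the single line described above.
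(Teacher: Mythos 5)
Your proof is correct, but there is nothing in the paper to compare it against: the paper does not prove Lemma \ref{abacus} at all; it presents it as a well-known ``abacus-condition'' lemma and cites Anderson and Olsson--Stanton for it. What you have written is essentially the classical argument that underlies that citation: the structural fact that the hook lengths in row $i$ are exactly $\{b_i - y : 0 \leq y < b_i,\ y \notin \beta(\lambda)\}$, from which the $s$-core criterion follows in one line by setting $y = x - s$. Your execution checks out. The identity $b_i - h(i,j) = (j-1) + \lvert\{i'>i : \lambda_{i'} \leq j-1\}\rvert$ is right; the jump of $f$ at $a \to a+1$ does skip exactly the $c$ consecutive integers $b_{i'} = (a+1)+k-i'$ coming from the block of rows $i'>i$ with $\lambda_{i'}=a+1$; and the one case your summary glosses over (the rows $i'>i$ with $\lambda_{i'} = \lambda_i$, whose $b_{i'}$ are not skipped at any jump in the range $0 \leq a \leq \lambda_i - 2$) is harmless, since those $b_{i'}$ lie strictly above $m(i,\lambda_i)$ and hence still avoid the image of $m(i,\cdot)$; together with the cardinality count $b_i - (k-i) = \lambda_i$ this forces the image to equal the full gap set, as you claim. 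Relative to the paper's choice to quote the result, your argument buys self-containedness and actually proves more than the lemma asks for --- the complete row-by-row description of all hook lengths in terms of the $\beta$-set, which is the abacus picture itself --- at the cost of roughly a page of bookkeeping for a fact the literature treats as standard, which is presumably why the paper cites rather than proves it.
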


Since the elements of $\beta$ are strictly positive, this latter condition implies $s \notin \beta(\lambda)$.
\\

For a positive integer $d$, we say that a subset $S \subseteq \mathbb{Z}$ is \textit{$d$-th order twin-free} if $|x-y|>d$ for all distinct elements $x, y \in S$.  The following straightforward result appears in \cite{sahin}.

\begin{lemma}[Sahin]
For any positive integer $d$ and any partition $\lambda$, the following conditions are equivalent:
\begin{itemize}
\item The partition $\lambda$ has $d$-distinct parts.
\item The set $\beta(\lambda)$ is $d$-th order twin-free.
\end{itemize}
\label{twinfree}
\end{lemma}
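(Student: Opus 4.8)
The plan is to translate both conditions into statements about the consecutive gaps of the strictly decreasing sequence that constitutes $\beta(\lambda)$, and then to observe that for a monotone sequence the smallest pairwise gap is always a consecutive one. First I would write $\beta(\lambda)=\{b_1, b_2, \dots, b_k\}$ with $b_i=h(i,1)=\lambda_i+k-i$, as recorded in the discussion preceding the lemma. Since $\lambda_1 \geq \lambda_2 \geq \dots \geq \lambda_k$, the computation $b_i-b_{i+1}=(\lambda_i-\lambda_{i+1})+1\geq 1$ shows that $b_1>b_2>\dots>b_k$; in particular the $b_i$ are distinct, so listing them in decreasing order is unambiguous. (The cases $k=0$ and $k=1$, in which there are no pairs to check, make both conditions vacuously true.)

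The key algebraic observation is the single index shift in that same identity: the $d$-distinct condition $\lambda_i-\lambda_{i+1}\geq d$ is equivalent to $b_i-b_{i+1}\geq d+1$, i.e.\ to $b_i-b_{i+1}>d$. Thus $\lambda$ has $d$-distinct parts precisely when every consecutive gap of $\beta(\lambda)$ exceeds $d$.

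It then remains to upgrade ``every consecutive gap exceeds $d$'' to the full twin-free condition ``$\abs{x-y}>d$ for all distinct $x,y\in\beta(\lambda)$.'' One direction is immediate, since consecutive pairs are a special case. For the converse I would use monotonicity: for any $i<j$ we have $b_i-b_j=\sum_{\ell=i}^{j-1}(b_\ell-b_{\ell+1})$, a sum of positive terms each at least $d+1$, so $b_i-b_j\geq (j-i)(d+1)>d$. Hence controlling the consecutive gaps controls all gaps, and the two formulations coincide. The chain of equivalences then reads: $\lambda$ has $d$-distinct parts $\iff$ all consecutive gaps of $\beta(\lambda)$ exceed $d$ $\iff$ $\beta(\lambda)$ is $d$-th order twin-free, which is the claim.

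Since this argument is short and purely computational, there is no genuine obstacle; the only point requiring a moment's care is the reduction from arbitrary pairs to consecutive pairs, which I would justify cleanly via the telescoping sum above rather than leaving it implicit.
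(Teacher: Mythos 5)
Your proof is correct. Note that the paper itself does not prove this lemma at all---it is attributed to Sahin and cited from \cite{sahin} as a ``straightforward result''---so there is no internal argument to compare against; your argument (translating $\lambda_i-\lambda_{i+1}\geq d$ into the consecutive-gap condition $b_i-b_{i+1}\geq d+1$ via $b_i=\lambda_i+k-i$, then upgrading to arbitrary pairs by the telescoping sum) is exactly the standard one, and the telescoping step you make explicit is the only point where care is needed.
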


We can use these two results to re-state our problem completely in terms of $\beta$-sets.  

\begin{lemma}

For positive integers $d$, $r$, and $s$, let $X_{d,r,s}$ denote the family of finite subsets $\beta$ of the positive integers satisfying the following three conditions:
\begin{enumerate}
\item For all $x \in \beta$ with $x \geq s$, we also have $x-s \in \beta$.
\item For all $x \in \beta$ with $x \geq s+r$, we also have $x-(s+r) \in \beta$.
\item The set $\beta$ is $d$-th order twin-free.
\end{enumerate}
Then $X_{d,r,s}$ consists of exactly the $\beta$-sets corresponding to the $(s, s+r)$-core partitions with $d$-distinct parts.  Moreover, $N_{d,r}(s)=|X_{d,r,s}|$ if $|X_{d,r,s}|$ is finite, and otherwise $N_{d,r}(s)=|X_{d,r,s}|=\infty$.
\label{correspondence}
\end{lemma}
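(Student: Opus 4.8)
The plan is to combine the bijection $\lambda \mapsto \beta(\lambda)$ with Lemmas~\ref{abacus} and~\ref{twinfree}, so that the result reduces to a straightforward translation between the language of partitions and the language of $\beta$-sets. The entire argument amounts to conjoining three equivalences and then invoking bijectivity.

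First I would fix an arbitrary partition $\lambda$ and characterize when $\beta(\lambda)$ satisfies each of the three defining conditions of $X_{d,r,s}$. By Lemma~\ref{abacus}, the partition $\lambda$ is $s$-core if and only if $\beta(\lambda)$ satisfies condition~(1). Since Lemma~\ref{abacus} holds for \emph{every} positive integer, I would then apply it with $s+r$ in place of $s$ to conclude that $\lambda$ is $(s+r)$-core if and only if $\beta(\lambda)$ satisfies condition~(2). Finally, Lemma~\ref{twinfree} gives that $\lambda$ has $d$-distinct parts if and only if $\beta(\lambda)$ is $d$-th order twin-free, which is exactly condition~(3). Taking the conjunction of these three equivalences, $\lambda$ is $(s,s+r)$-core with $d$-distinct parts if and only if $\beta(\lambda)\in X_{d,r,s}$.

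Because $\lambda \mapsto \beta(\lambda)$ is a bijection from the set of all partitions onto the set of all finite subsets of the positive integers, as established earlier in this section, the equivalence just obtained shows that the image under $\beta$ of the set of $(s,s+r)$-core partitions with $d$-distinct parts is precisely $X_{d,r,s}$; this is the first assertion of the lemma. Restricting the bijection to this subfamily, the number of such partitions equals $|X_{d,r,s}|$, and since a bijection preserves cardinality whether finite or infinite, we obtain $N_{d,r}(s)=|X_{d,r,s}|$ in both cases.

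As every ingredient is already in hand, there is no substantive obstacle here; the only points meriting care are verifying that the substitution $s\mapsto s+r$ into Lemma~\ref{abacus} is legitimate (both quantities are arbitrary positive integers) and correctly tracking the infinite case through the bijection so that the final equality holds unconditionally.
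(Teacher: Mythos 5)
Your proposal is correct and matches the paper's own proof in all essentials: both arguments apply Lemma~\ref{abacus} twice (once with $s$, once with $s+r$), combine this with Lemma~\ref{twinfree}, and then invoke the bijectivity of $\lambda \mapsto \beta(\lambda)$ to transfer the correspondence and cardinality statement. Your explicit remarks about the legitimacy of substituting $s+r$ and about tracking the infinite case are careful touches, but the underlying argument is the same.
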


\begin{proof}
Fix any $\beta \in X_{d,r,s}$, and let $\lambda$ be its associated partition.  By Lemma \ref{abacus}, the first and second conditions on $\beta$ are equivalent to $\lambda$ being $(s, s+r)$-core.  By Lemma \ref{twinfree}, the third condition is equivalent to $\lambda$ having $d$-distinct parts.  Now, fix any $(s, s+r)$-core partition $\lambda$ with $d$-distinct parts.  By the same reasoning, its $\beta$-set is an element of $X_{d,r,s}$.  This establishes the desired correspondence, and the second part of the lemma immediately follows.
\end{proof}

Recall Anderson's result \cite{anderson} that $N_{0,r}(s)<\infty$ if and only if $\gcd(s,r)=1$.  We prove an analogous criterion for $d\geq 1$.

\begin{lemma}
For any positive integers $d$, $r$, and $s$, we have $N_{d,r}(s)<\infty$ if and only if $\gcd(s,r)\leq d$.
\label{finite}
\end{lemma}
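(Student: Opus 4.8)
The plan is to work entirely through the $\beta$-set reformulation of Lemma \ref{correspondence} and to convert finiteness of $X_{d,r,s}$ into a uniform bound on the elements of its members. Since each $\beta\in X_{d,r,s}$ is itself a finite subset of $\mathbb{Z}^+$, the family $X_{d,r,s}$ is finite precisely when there is a constant $M$ (depending only on $s$ and $r$) with $\max\beta\leq M$ for every $\beta\in X_{d,r,s}$: a uniform bound forces $X_{d,r,s}\subseteq 2^{\{1,\dots,M\}}$, while the absence of one produces members with unboundedly large maxima, hence infinitely many distinct members. Throughout I would write $g=\gcd(s,r)=\gcd(s,s+r)$ and let $S=\{as+b(s+r):a,b\in\mathbb{Z}_{\geq 0}\}$ be the numerical semigroup generated by $s$ and $s+r$.

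The technical backbone is a \emph{closure observation}: if $\beta\in X_{d,r,s}$, $x\in\beta$, and $\sigma\in S$ satisfies $x-\sigma\geq 1$, then $x-\sigma\in\beta$. To prove this I would decompose $\sigma$ into a sequence of steps of sizes $s$ and $s+r$ and subtract them one at a time. The key point is that just before subtracting a step of size $t\in\{s,s+r\}$, the current value equals $(x-\sigma)$ plus the sum of the as-yet-unsubtracted steps, which is at least $(x-\sigma)+t\geq 1+t>t$; hence conditions (1)--(2) of Lemma \ref{correspondence} apply and every intermediate value stays positive. Iterating the two core conditions therefore lets us subtract any element of $S$ as long as we land on a positive integer.

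For the finiteness direction, suppose $g\leq d$. Since $s/g$ and $(s+r)/g$ are coprime, the Sylvester--Frobenius theorem shows that $S$ contains every sufficiently large multiple of $g$; I would fix $F$ with $ng\in S$ whenever $ng\geq F$. If some $\beta\in X_{d,r,s}$ contained an element $x\geq F+2g+1$, I would pick $k=\lceil F/g\rceil$, so that $kg$ and $(k+1)g$ both lie in $S$ and both are at most $x-1$; the closure observation then places $x-kg$ and $x-(k+1)g$ in $\beta$. These are distinct elements differing by exactly $g\leq d$, contradicting the $d$-th order twin-free condition. Hence $M=F+2g$ is a uniform bound and $N_{d,r}(s)<\infty$.

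For the infiniteness direction, suppose $g\geq d+1$. Because $g\geq d+1\geq 2$, I can fix a residue $c$ with $1\leq c\leq g-1$ and consider, for each $m\geq 1$, the set $\beta_m=\{c,c+g,\dots,c+(m-1)g\}$. Its consecutive gaps equal $g>d$, so $\beta_m$ is $d$-th order twin-free; and since $g$ divides both $s$ and $s+r$, subtracting $s$ (resp.\ $s+r$) from an element $c+jg\geq s$ (resp.\ $\geq s+r$) lands on another element of $\beta_m$ in the same residue class, so conditions (1)--(2) hold and no element equals $s$ or $s+r$. Thus every $\beta_m$ belongs to $X_{d,r,s}$, giving infinitely many members and $N_{d,r}(s)=\infty$. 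The main obstacle is the finiteness direction: getting the closure observation right with its positivity bookkeeping and then invoking the numerical-semigroup input to manufacture two elements of $\beta$ at distance exactly $g$, which is what clashes with twin-freeness once $g\leq d$.
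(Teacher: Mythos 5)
Your proof is correct and takes essentially the same approach as the paper: both directions rest on the same two ideas, namely that iterating the core conditions makes $\beta$ closed under subtracting elements of the numerical semigroup generated by $s$ and $s+r$ (so two semigroup elements at distance exactly $\gcd(s,r)\leq d$ force a twin-free violation in any sufficiently large element, bounding $\beta$), and that when $\gcd(s,r)>d$ the arithmetic progressions with common difference $\gcd(s,r)$ give infinitely many valid $\beta$-sets. The only cosmetic difference is that the paper manufactures its distance-$\gcd(s,r)$ pair via B\'ezout (as $b(s+r)$ and $cs$), whereas you use Sylvester--Frobenius to get consecutive multiples $kg$ and $(k+1)g$ of $g=\gcd(s,r)$ in the semigroup; your explicit positivity bookkeeping in the closure observation is a detail the paper leaves implicit.
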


\begin{proof}
Lemma \ref{correspondence} tells us that $N_{d,r}(s)<\infty$ exactly when $X_{d,r,s}$ contains a finite number of elements.
First, suppose $\gcd(s,s+r)=\gcd(s,r)\leq d$.  By B\'ezout's Lemma, there exist positive integers $a$ and $b$ such that $as+b(s+r)\equiv \gcd(s,r) \pmod{s}$.  In particular, there exists a positive integer $c$ such that $b(s+r)=\gcd(s,r)+cs$.  We now claim that any $\beta \in X_{d,r,s}$ satisfies $\beta \subseteq \{1, 2, \dots, b(s+r)\}$.  Assume (for the sake of contradiction) that $x \in \beta$ for some $x\geq b(s+r)+1$.  Then, by Lemma \ref{correspondence}, we also have $x-b(s+r), x-cs \in \beta$.  But $|(x-cs)-(x-b(s+r))|=\gcd(s,r)\leq d$ contradicts $\beta$ being $d$-th order twin-free.  So we must have $\beta \subseteq \{1, 2, \dots, b(s+r)\}$, which implies that $|X_{d,r,s}|\leq 2^{b(s+r)}<\infty$.
\\

Second, suppose $\gcd(s,r)>d$.  It is clear that for any positive integer $a$, the set $\beta=\{1, 1+\gcd(s,r), 1+2\gcd(s,r), \dots, 1+a\gcd(s,r)\}$ satisfies all three conditions for being in $ X_{d,r,s}$.  Hence, $|X_{d,r,s}|=\infty$.
\end{proof}

This lemma tells us that $N_{d,r}(s)$ is always finite when $r \leq d$.  In fact, in this case we can get a tight bound on the largest possible element of any $\beta \in X_{d,r,s}$.  (The bound is tight in the sense that it is always achievable.)

\begin{lemma}
For any positive integers $r \leq d$ and any positive integer $s$, any set $\beta \in X_{d,r,s}$ must satisfy $\beta \subseteq \{1, 2, \dots, s+r-1\}\setminus \{s\}$.  When $r=1$, this bound gives $\beta \subseteq \{1, 2, \dots, s-1\}$.
\label{betabounds}
\end{lemma}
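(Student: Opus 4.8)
The plan is to argue directly from the three defining conditions of $X_{d,r,s}$ in Lemma \ref{correspondence}, deriving a contradiction from the existence of any element that lies outside the claimed range. I would split the statement into two independent parts: first that $s\notin\beta$, and second that every element of $\beta$ is at most $s+r-1$. The first part is immediate: if $s\in\beta$, then applying condition (1) with $x=s$ forces $0=s-s\in\beta$, which is impossible since $\beta$ consists of strictly positive integers. This is precisely the observation recorded right after Lemma \ref{abacus}, so $s\notin\beta$.

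For the second part, I would suppose toward a contradiction that some $x\in\beta$ satisfies $x\geq s+r$. Condition (2) then forces $x-(s+r)\in\beta$; since every element of $\beta$ is a positive integer, this gives $x-(s+r)\geq 1$, hence $x\geq s+r+1>s$. Now condition (1) applies and yields $x-s\in\beta$ as well. The two elements $x-s$ and $x-(s+r)$ both lie in $\beta$, are both positive, and satisfy $(x-s)-\bigl(x-(s+r)\bigr)=r$, so they are distinct and differ by exactly $r\leq d$. This contradicts the $d$-th order twin-free condition (3). Therefore no element of $\beta$ can be as large as $s+r$, and combining this with $s\notin\beta$ gives $\beta\subseteq\{1,2,\dots,s+r-1\}\setminus\{s\}$. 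Specializing to $r=1$ collapses the range to $\{1,2,\dots,s\}\setminus\{s\}=\{1,2,\dots,s-1\}$, which is the stated refinement.

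I do not anticipate a serious obstacle here, since the argument is short and uses each defining condition exactly once. The only subtlety worth flagging is the boundary value $x=s+r$, where the element $x-(s+r)$ produced by condition (2) would equal $0$; it is the positivity of the elements of $\beta$ (built into the definition of $X_{d,r,s}$) that rules this out, forces $x\geq s+r+1$, and simultaneously guarantees that $x-s$ and $x-(s+r)$ are genuinely distinct, so that the twin-free hypothesis can legitimately be invoked. Note also that the inequality $r\leq d$ is exactly what makes the forbidden gap $r$ fall within the twin-free threshold $d$, so the hypothesis $r\leq d$ is used in an essential way.
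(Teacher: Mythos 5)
Your proof is correct and takes essentially the same approach as the paper: both derive the upper bound by observing that an element $x$ at or above $s+r$ would force $x-s$ and $x-(s+r)$ into $\beta$ with difference exactly $r\leq d$, violating the twin-free condition, and both exclude $s$ (and the boundary value $s+r$) using the positivity of elements of $\beta$, i.e., the remark following Lemma \ref{abacus}. The only cosmetic difference is that you fold the exclusion of $s+r$ into the main contradiction via positivity, whereas the paper rules out $x\geq s+r+1$ first and then cites the remark to remove $s$ and $s+r$ separately.
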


\begin{proof}
Assume (for contradiction) that $x \in \beta$ for some $x \geq s+r+1$.  Then, by Lemma \ref{correspondence}, we also have $x-s, x-(s+r) \in X_{d,r,s}$.  But $|(x-s)-(x-(s-r))|=r\leq d$ is a contradiction.  Hence, $\beta \subseteq \{1, 2, \dots, s+r\}$.  From the remark after Lemma \ref{abacus}, we also know that $s, s+r \notin \beta$, which yields $\beta \subseteq \{1, 2, \dots, s+r-1\}\setminus \{s\}$.  For the case of $r=1$, we note that $\{1, 2, \dots, s+1-1\}\setminus \{s\}=\{1, 2, \dots, s-1\}$.
\end{proof}

When $r=1$, the bound of Lemma \ref{betabounds} guarantees that the first two conditions of Lemma \ref{correspondence} are satisfied, so $N_{d,1}(s)$ simply counts the $d$-th order twin-free subsets of $\{1, 2, \dots, s-1\}$.

\subsection{Interpreting $N_{d,r}(s)$ for $s \leq 0$}

It is not a priori obvious how $N_{d,r}(s)$ should be extended to $s \leq 0$.  After all, hook lengths are always strictly positive, so avoiding hooks of length $s$ does not seem meaningful when $s \leq 0$.  Motivated by the previous section, we propose the following interpretation: for $r \leq d$, we define $N_{d,r}(s)$ to be the number of subsets of $\{1, 2, \dots, s+r-1\}$ satisfying the three conditions of Lemma \ref{correspondence}.  The following proposition expresses this quantity in a convenient form.

\begin{proposition}
For positive integers $r \leq d$, the extension of $N_{d,r}(s)$ to $s \leq 0$ has the form
$$N_{d,r}(s)=
\begin{cases}
1, &s<0\\
N_{d,1}(r), &s=0. 
\end{cases}$$
\label{negatives}
\end{proposition}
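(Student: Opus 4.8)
The plan is to treat the two cases $s<0$ and $s=0$ separately, in each case working directly from the stated definition: for $r\le d$ and $s\le 0$, the quantity $N_{d,r}(s)$ is the number of subsets $\beta$ of $\{1,2,\dots,s+r-1\}$ satisfying conditions $(1)$, $(2)$, and $(3)$ of Lemma \ref{correspondence}. The empty set always satisfies all three conditions vacuously, so the real content is to determine which nonempty $\beta$ survive, being careful that the ground set $\{1,2,\dots,s+r-1\}$ may itself be empty when $s+r-1<1$.

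For $s<0$ I would argue that no nonempty $\beta$ is admissible. Suppose $\beta\neq\emptyset$ and let $x^*$ be its largest element. Since $x^*\ge 1>s$, condition $(1)$ forces $x^*-s\in\beta$; but $s<0$ gives $x^*-s=x^*+\abs{s}>x^*$, contradicting the maximality of $x^*$. Hence the only admissible set is $\beta=\emptyset$, and so $N_{d,r}(s)=1$. I should note that this argument is uniform across all $s<0$: whether the ground set is empty or contains a few small integers, the maximality obstruction rules out every nonempty candidate.

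For $s=0$ I would observe that conditions $(1)$ and $(2)$ degenerate. With $s=0$, condition $(1)$ reads ``$x-0=x\in\beta$ for all $x\in\beta$,'' which holds automatically; and since the ground set is $\{1,2,\dots,r-1\}$, no element is as large as $s+r=r$, so condition $(2)$ is vacuous. Thus $N_{d,r}(0)$ counts exactly the $d$-th order twin-free subsets of $\{1,2,\dots,r-1\}$. By the remark following Lemma \ref{betabounds}, the number of $d$-th order twin-free subsets of $\{1,2,\dots,r-1\}$ is precisely $N_{d,1}(r)$, which yields $N_{d,r}(0)=N_{d,1}(r)$.

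Neither case requires heavy computation, so the ``hard part'' is really one of care rather than difficulty: I must make sure the degenerate ground sets (empty when $s+r-1<1$, and equal to $\{1,\dots,r-1\}$ when $s=0$, which is itself empty if $r=1$) are handled correctly, and that the reduction in the $s=0$ case is matched to the correct reading of the remark after Lemma \ref{betabounds}, namely with $s$ there specialized to $r$. Once these bookkeeping points are pinned down, the two cases close immediately.
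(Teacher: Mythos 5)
Your proof is correct and follows essentially the same route as the paper: for $s<0$ you use condition $(1)$ of Lemma \ref{correspondence} to force an element larger than the largest element of any nonempty $\beta$ (the paper phrases this as the infinite chain $\{x-s, x-2s, \dots\}$ violating the upper bound, which is the same obstruction), and for $s=0$ you observe that conditions $(1)$ and $(2)$ are vacuous on subsets of $\{1,\dots,r-1\}$, reducing the count to the $d$-th order twin-free subsets enumerated by $N_{d,1}(r)$ via the remark after Lemma \ref{betabounds}. Your extra care with the degenerate ground sets is a fine touch but not a substantive difference.
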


\begin{proof}
It is clear that when $s<0$, only the empty set satisfies the conditions of Lemma \ref{correspondence}: if there were some $x \in \beta$, then $\{x-s, x-2s, \dots\} \subseteq \beta$ would contradict the upper bound on the elements of $\beta$.  When $s=0$, the first and second conditions of Lemma \ref{correspondence} are always satisfied because $N_{d,r}(0)$ counts subsets of $\{1, 2, \dots, r-1\}$.  Thus, $N_{d,r}(0)$ counts the $d$-th order twin-free subsets of $\{1, 2, \dots, r-1\}$, of which there are $N_{d,1}(r)$ (as noted in the discussion of Lemma \ref{betabounds}).
\end{proof}

We can now re-state Theorem \ref{sahinthm} and Conjecture \ref{sahinconj} to include $s \leq 0$.

\begin{theorem}[Sahin, extended]
For any positive integer $d$, we have
$$N_{d,1}(s)=
\begin{cases}
1, &s=1\\
N_{d,1}(s-1)+N_{d,1}(s-d-1), &s \geq 2.
\end{cases}$$
\label{sahinthmext}
\end{theorem}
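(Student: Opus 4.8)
The plan is to prove the recurrence combinatorially, using the characterization noted after Lemma \ref{betabounds} that $N_{d,1}(s)$ equals the number of $d$-th order twin-free subsets of $\{1, 2, \dots, s-1\}$. First I would check the base case: for $s=1$ the relevant ground set is $\{1, \dots, 0\}=\emptyset$, whose only subset is $\emptyset$, so $N_{d,1}(1)=1$ as claimed.

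For the recurrence, fix $s \geq 2$ and partition the $d$-th order twin-free subsets $\beta$ of $\{1, \dots, s-1\}$ according to whether the largest possible element $s-1$ lies in $\beta$. Those with $s-1 \notin \beta$ are exactly the $d$-th order twin-free subsets of $\{1, \dots, s-2\}$, and these number $N_{d,1}(s-1)$. Those with $s-1 \in \beta$ must, by the twin-free property, have every other element below $s-1-d$, i.e.\ in $\{1, \dots, s-d-2\}$; deleting $s-1$ gives a bijection (its inverse reinserts $s-1$, which stays twin-free since its gap to any retained element is at least $d+1$) onto the $d$-th order twin-free subsets of $\{1, \dots, s-d-2\}$, which number $N_{d,1}(s-d-1)$. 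Adding the two contributions produces the stated recurrence.

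The one step requiring care, and the reason the extension to $s \leq 0$ is doing real work, is matching the second contribution with $N_{d,1}(s-d-1)$ when $2 \leq s \leq d+1$. In that range $s-d-2 \leq -1$, so $\{1, \dots, s-d-2\}$ is empty and its unique twin-free subset is $\emptyset$, corresponding to the single set $\{s-1\}$; this count of $1$ is precisely the extended value $N_{d,1}(s-d-1)=1$ furnished by Proposition \ref{negatives} (using $N_{d,1}(0)=N_{d,1}(1)=1$). Hence the recurrence holds for every $s \geq 2$, which is the strengthening over Theorem \ref{sahinthm}, where it was available only for $s \geq d+2$. I do not anticipate a genuine obstacle here beyond this boundary bookkeeping; indeed, one could bypass the bijection entirely and simply observe that for $2 \leq s \leq d+1$ Theorem \ref{sahinthm} and Proposition \ref{negatives} give $N_{d,1}(s-1)=s-1$ and $N_{d,1}(s-d-1)=1$, whose sum is $s=N_{d,1}(s)$.
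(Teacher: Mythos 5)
Your proof is correct, but it takes a genuinely different route from the paper's. The paper treats Sahin's Theorem \ref{sahinthm} as a known result (it is proven in \cite{sahin}) and gives only a short \emph{proof of equivalence}: for $s \geq d+2$ the two statements are literally identical, and for $2 \leq s \leq d+1$ one notes $s-d-1 \leq 0$, so $N_{d,1}(s-d-1)=1$ by Proposition \ref{negatives}, whence induction on $s$ recovers the values $N_{d,1}(s)=s$. Your closing remark (``one could bypass the bijection entirely\dots'') is essentially exactly this argument. Your main argument, by contrast, is a self-contained combinatorial proof: using the characterization of $N_{d,1}(s)$ as the number of $d$-th order twin-free subsets of $\{1,\dots,s-1\}$, you condition on whether $s-1 \in \beta$, and the deletion/reinsertion bijection onto twin-free subsets of $\{1,\dots,s-d-2\}$ is verified correctly, with the boundary cases $s-d-1 \leq 0$ matched to the extended values via Proposition \ref{negatives}. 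This is in fact the same conditioning that Sahin's original proof of Theorem \ref{sahinthm} uses (as the paper remarks in Section 4), here extended uniformly to all $s \geq 2$. What each approach buys: yours re-proves the $r=1$ recurrence from scratch and demonstrates concretely that the extension of Proposition \ref{negatives} is the ``correct'' one (the empty ground set contributes exactly the $1$ that the extension assigns); the paper's is shorter because it outsources the combinatorial content to the cited theorem and only reconciles the two formulations at the boundary.
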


\begin{proof}[Proof of equivalence]
We need to show that $N_{d,1}(s-1)+N_{d,1}(s-d-1)=s$ for $2 \leq s \leq d+1$.  Note that $s-d-1\leq (d+1)-d-1=0$ and hence $N_{d,1}(s-d-1)=1$ by Proposition \ref{negatives}.  Induction on $s$ establishes the desired equality.
\end{proof}

\begin{conjecture}[Sahin, extended]
For any positive integers $r\leq d$, we have
$$N_{d, r}(s)=
\begin{cases}
1, &s=1\\
N_{d,r}(s-1)+N_{d, r}(s-d-1), &s\geq 2.
\end{cases}$$
\label{sahinconjext}
\end{conjecture}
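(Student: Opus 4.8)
The plan is to establish that Conjecture \ref{sahinconjext} and Conjecture \ref{sahinconj} are equivalent, in exactly the same spirit as the equivalence already proved for the two forms of Sahin's theorem. Assuming the recurrence $N_{d,r}(s)=N_{d,r}(s-1)+N_{d,r}(s-d-1)$ for all $s\geq 2$ together with the extension supplied by Proposition \ref{negatives}, I would show that it reproduces the original initial data $N_{d,r}(s)=s$ for $1\leq s\leq d$ and $N_{d,r}(d+1)=d+r$. Conversely, the two forms invoke the identical recurrence verbatim for $s\geq d+2$, so it suffices to verify these finitely many initial values.

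First I would treat the range $2\leq s\leq d$. Here $s-d-1\leq -1<0$, so Proposition \ref{negatives} gives $N_{d,r}(s-d-1)=1$, and the recurrence collapses to $N_{d,r}(s)=N_{d,r}(s-1)+1$. Starting from the initial condition $N_{d,r}(1)=1$, a one-line induction then yields $N_{d,r}(s)=s$ throughout this range, matching the first case of Conjecture \ref{sahinconj}.

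The crux is the case $s=d+1$, where the recurrence gives $N_{d,r}(d+1)=N_{d,r}(d)+N_{d,r}(0)=d+N_{d,r}(0)$. By Proposition \ref{negatives} we have $N_{d,r}(0)=N_{d,1}(r)$, so everything reduces to evaluating $N_{d,1}(r)$. This is precisely where the hypothesis $r\leq d$ enters: recall from the discussion of Lemma \ref{betabounds} that $N_{d,1}(r)$ counts the $d$-th order twin-free subsets of $\{1,2,\dots,r-1\}$, and since any two distinct elements of this set differ by at most $r-2<d$, no two of them can coexist in a twin-free subset. Hence the only admissible subsets are the empty set and the $r-1$ singletons, so $N_{d,1}(r)=r$ and therefore $N_{d,r}(d+1)=d+r=(d+1)+r-1$, which is exactly the $s=d+1$ base case. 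Finally, for $s\geq d+2$ the two recurrences coincide, completing the equivalence. I expect the evaluation $N_{d,1}(r)=r$ to be the only genuinely delicate point, as it is the mechanism that produces the ``$+(r-1)$'' correction at $s=d+1$ and the sole place where the restriction $r\leq d$ is used; the full proof of the conjecture itself I would defer to the $r$-Reduction Theorem.
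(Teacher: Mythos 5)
Your proposal is correct and takes essentially the same route as the paper: both reduce the extended conjecture to verifying the initial values for $2\leq s\leq d+1$ via Proposition \ref{negatives}, with the $s=d+1$ case resting on $N_{d,r}(d)+N_{d,r}(0)=d+N_{d,1}(r)=d+r$. The only difference is that you spell out the evaluation $N_{d,1}(r)=r$ (the empty set plus the $r-1$ singletons among the $d$-th order twin-free subsets of $\{1,\dots,r-1\}$), which the paper asserts without comment.
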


\begin{proof}[Proof of equivalence]
The equivalence for $2 \leq s \leq d$ follows as in the previous proof.  For $s=d+1$, we must show that $N_{d,r}(s-1)+N_{d,r}(s-d-1)=s+r-1$, i.e., $N_{d,r}(d)+N_{d,r}(0)=d+r$.  But this follows immediately from $N_{d,r}(d)=d$ and $N_{d,r}(0)=N_{d,1}(r)=r$.
\end{proof}

These reformulations are substantially simpler than the originals, especially for the conjecture.  These results, along with what follows, should convince the reader that Proposition \ref{negatives} gives the ``correct'' extension of $N_{d,r}(s)$.

\subsection{The $r$-Reduction Theorem}

In this section, we prove the $r$-Reduction Theorem.  As the name suggests, this theorem helps us understand the dependence of $N_{d,r}(s)$ on $r$ (for $r \leq d$).  Indeed, we have a simple expression for $N_{d,r}(s)$ in terms of $N_{d,1}(s)$ and $N_{d,1}(s-2d)$.  Because the behavior of $N_{d,r}(s)$ depends only minimally on $r$, results for $N_{d,1}(s)$ often generalize to all $r \leq d$ with minimal effort.

\begin{theorem}[$r$-Reduction]
For any positive integers $r \leq d$, we have
$$N_{d, r}(s)=
\begin{cases}
N_{d,1}(s), &1 \leq s \leq d\\
N_{d,1}(s)+(r-1)N_{d, 1}(s-2d), &s\geq d+1.
\end{cases}$$
\label{reduction}
\end{theorem}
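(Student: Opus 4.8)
The plan is to work entirely with $\beta$-sets via Lemma \ref{correspondence}, writing $N_{d,r}(s)=|X_{d,r,s}|$, and to partition $X_{d,r,s}$ according to whether $\beta$ contains an element exceeding $s$. By Lemma \ref{betabounds} every $\beta \in X_{d,r,s}$ lies in $\{1, \ldots, s+r-1\} \setminus \{s\}$, so the only elements that can exceed $s$ lie in the window $\{s+1, \ldots, s+r-1\}$. This window contains $r-1$ integers, any two of which differ by at most $r-2<d$; hence the $d$-th order twin-free condition forces $\beta$ to contain at most one element of the window. This splits $X_{d,r,s}$ into the family $A$ of sets avoiding the window entirely and, for each $j$ with $1 \leq j \leq r-1$, the family $B_j$ of sets whose unique large element is $s+j$.

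For the family $A$, conditions (1) and (2) of Lemma \ref{correspondence} are vacuous since every element is below $s$, so $A$ consists exactly of the $d$-th order twin-free subsets of $\{1, \ldots, s-1\}$; by the discussion following Lemma \ref{betabounds} this gives $|A|=N_{d,1}(s)$. The heart of the argument is evaluating $|B_j|$. If $s+j \in \beta$, then condition (1) forces $j \in \beta$, while twin-freeness requires $|(s+j)-j|=s>d$. When $1 \leq s \leq d$ this is impossible, so every $B_j$ is empty and $N_{d,r}(s)=N_{d,1}(s)$, which is precisely the first case of the theorem.

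For $s \geq d+1$ I would analyze $B_j$ as follows. Fix $\beta \in B_j$ and write $\beta=\{s+j\}\cup\beta'$ with $\beta' \subseteq \{1,\ldots,s-1\}$. Twin-freeness against $s+j$ forces $\beta' \subseteq \{1, \ldots, s+j-d-1\}$, and condition (1) forces $j \in \beta'$; conversely any twin-free $\beta' \subseteq \{1,\ldots,s+j-d-1\}$ containing $j$ yields a valid $\beta$, since condition (2) is vacuous (as $s+j \leq s+r-1<s+r$). Because $j \leq r-1 \leq d-1$, any element $y \in \beta'$ with $y<j$ would force $j-y>d$, i.e.\ $y<j-d \leq -1$, which is impossible; hence $j$ is the minimum of $\beta'$ and every other element lies in $\{j+d+1, \ldots, s+j-d-1\}$. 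Subtracting $j+d$ from those elements is a difference-preserving bijection onto the $d$-th order twin-free subsets of $\{1, \ldots, s-2d-1\}$, so $|B_j|=N_{d,1}(s-2d)$ independently of $j$. Summing over $A$ and the $r-1$ families $B_j$ then gives $N_{d,r}(s)=N_{d,1}(s)+(r-1)N_{d,1}(s-2d)$.

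The main obstacle is the evaluation of $|B_j|$: the key observation is that the forced membership $j \in \beta'$ together with $j \leq d-1$ pins $j$ as the least element of $\beta'$, after which the shift by $j+d$ collapses the seemingly $j$-dependent count to the uniform value $N_{d,1}(s-2d)$. Care is also needed at the boundary $d+1 \leq s \leq 2d+1$, where the shifted ground set $\{1,\ldots,s-2d-1\}$ is empty and I must check that the extension of Proposition \ref{negatives} correctly records $N_{d,1}(s-2d)=1$, matching the single surviving set $\beta'=\{j\}$.
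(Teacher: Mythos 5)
Your proposal is correct and follows essentially the same route as the paper's own proof: both work with $\beta$-sets via Lemma \ref{correspondence} and Lemma \ref{betabounds}, split according to whether $\beta$ contains an element $s+k$ above $s$ (noting at most one such element can occur), use condition (1) to force $k \in \beta$, and then observe that the remaining elements are confined to an interval of $s-2d-1$ consecutive integers, yielding $N_{d,1}(s-2d)$ choices for each of the $r-1$ values of $k$. Your explicit shift bijection and the boundary check for $d+1 \leq s \leq 2d+1$ correspond exactly to the paper's re-indexing of $\{k+d+1, \dots, s+k-d-1\}$ and its appeal to $N_{d,1}(s-2d)=1$ when $s-2d \leq 0$.
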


\begin{proof}
Since the result is trivial for $r=1$, we restrict our attention to $r \geq 2$.  We count the sets $\beta \in X_{d,r,s}$, which, by Lemma \ref{correspondence}, will give us $N_{d,r}(s)$.
\\

First, consider $1 \leq s \leq d$.  It is clear that $\beta$ cannot contain any element $x\geq s+1$, for then we would have $x-s \in \beta$, and $|(x)-(x-s)|=s\leq d$ would yield a contradiction.  Hence, $\beta \subseteq \{1, 2, \dots, s-1\}$.  Such a set $\beta$ trivially satisfies the first and second conditions of Lemma \ref{correspondence}, so $\beta$ can be any $d$-th order twin-free subset of $\{1, 2, \dots, s-1\}$.  As noted after Lemma \ref{betabounds}, there are exactly $N_{d,1}(s)$ such subsets.
\\

Second, consider $s \geq d+1$.  Lemma \ref{betabounds} tells us that $\beta \subseteq \{1, 2, \dots, s-1, s+1, \dots, s+r-1\}$.  We condition on the largest element of $\beta$.  If $\beta \subseteq \{1, 2, \dots, s-1\}$, then there are $N_{d,1}(s)$ possibilities.  Now, suppose that $\beta$ contains some element larger than $s-1$, say, $s+k \in \beta$ for some $1 \leq k \leq r-1$.  Then we also have $(s+k)-(s)=k \in \beta$.  Because $\beta$ is $d$-th order twin-free, we know that $k$ and $s+k$ are the only elements of $\beta$ in $\{k-d, k-d+1, \dots, k+d\} \cup \{s+k-d, \dots, s+k+d\}$.  Moreover, $k \leq r-1\leq d-1$ implies $k-d \leq -1$, and $k \geq 1$ implies $s+k+d \geq s+1+d \geq s+r+1$.  This lets us conclude that all other elements of $\beta$ must be in $\{k+d+1, k+d+2, \dots, s+k-d-1\}$.  (In particular, $s+k-d-1 \leq s+(d-1)-d-1=s-2$ shows that $\beta$ cannot contain a second element larger than $s-1$.)  Since $s+k-d-1<s$, the first and second conditions of Lemma \ref{correspondence} do not put any restrictions on which elements of $\{k+d+1, k+d+2, \dots, s+k-d-1\}$ can be in $\beta$.  In fact, $\beta$ can contain any $d$-th order twin-free subset of $\{k+d+1, k+d+2, \dots, s-k-d-1\}$.  Writing $\{k+d+1, k+d+2, \dots, s+k-d-1\}=\{(k+d)+1, (k+d)+2, \dots, (k+d)+(s-2d-1)\}$, we see that there are $N_{d,1}(s-2d)$ such subsets for each choice of $k$.  (For $s-2d\leq 0$, recall that $N_{d,1}(s-2d)=1$ counts only the empty set.)  In total, this gives $N_{d,r}(s)=N_{d,1}(s)+(r-1)N_{d,1}(s-2d)$, as desired.
\end{proof}

\pagebreak

\section{An exact formula for $N_{d,r}(s)$ and a connection \\to $A$-restricted compositions}

In this section, we use a direct counting argument to derive a formula for $N_{d,r}(s)$ when $r \leq d$.  We begin with the case $r=1$, and the corresponding formula for general $r \leq d$ follows from the $r$-Reduction Theorem.
\\

For any $x \in \mathbb{R}$, let $\lceil x \rceil$ denote the smallest integer greater than or equal to $x$.

\begin{lemma}
For any positive integer $d$ and any integer $s\geq -d+1$, we have
$$N_{d,1}(s)=\sum_{\mu=0}^{\left\lceil \frac{s-1}{d+1} \right\rceil}\binom{s+d-d\mu-1}{\mu}.$$
\label{r=1formula}
\end{lemma}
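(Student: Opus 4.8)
The plan is to reduce the claim to a clean lattice-point count and then verify the index bookkeeping. By the remark following Lemma~\ref{betabounds}, for $s \geq 1$ the quantity $N_{d,1}(s)$ is exactly the number of $d$-th order twin-free subsets of $\{1, 2, \dots, s-1\}$, that is, subsets $\{a_1 < a_2 < \cdots < a_\mu\}$ with $a_{i+1}-a_i \geq d+1$ for all $i$. The strategy is to stratify these subsets by their size $\mu$ and count each stratum via a standard ``gap removal'' bijection, then sum over $\mu$.

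First I would handle the boundary cases $-d+1 \leq s \leq 0$ directly: by Proposition~\ref{negatives} we have $N_{d,1}(s)=1$, and on the other side the stated upper limit $\lceil (s-1)/(d+1)\rceil$ equals $0$ (since $(s-1)/(d+1) \in (-1,0)$ for such $s$), so the sum collapses to the single term $\binom{s+d-1}{0}=1$. This disposes of $s \leq 0$ and lets me assume $s \geq 1$ for the main argument.

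For the main count, I would fix $\mu \geq 0$ and count the size-$\mu$ twin-free subsets of $\{1,\dots,s-1\}$. The key step is the substitution $b_i := a_i - (i-1)d$, which sends $\{a_1 < \cdots < a_\mu\}$ to a sequence with $b_1 \geq 1$ and $b_{i+1}-b_i = (a_{i+1}-a_i)-d \geq 1$, hence to a strictly increasing sequence, i.e.\ an arbitrary $\mu$-element subset of $\{1, 2, \dots, s-1-(\mu-1)d\}$. This map is a bijection (its inverse adds back $(i-1)d$ to the $i$-th smallest element), so the number of size-$\mu$ strata is
$$\binom{s-1-(\mu-1)d}{\mu}=\binom{s+d-d\mu-1}{\mu}.$$
Summing over all $\mu$ for which this binomial coefficient is positive then yields the desired total.

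The main obstacle I anticipate is purely a matter of index bookkeeping: confirming that the upper summation limit $\lceil (s-1)/(d+1) \rceil$ is precisely the largest $\mu$ for which $\binom{s+d-d\mu-1}{\mu}$ is nonzero. The binomial is nonzero exactly when $s+d-d\mu-1 \geq \mu$, i.e.\ $\mu \leq (s+d-1)/(d+1)$, so the largest admissible $\mu$ is $\lfloor (s+d-1)/(d+1)\rfloor$. Writing $s-1=q(d+1)+t$ with $0 \leq t \leq d$, I would check that both $\lfloor (s+d-1)/(d+1)\rfloor$ and $\lceil (s-1)/(d+1)\rceil$ equal $q$ when $t=0$ and $q+1$ when $1 \leq t \leq d$, so the two indexing conventions agree. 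Since every term beyond this threshold vanishes, extending or truncating the sum at $\lceil (s-1)/(d+1)\rceil$ changes nothing, and the formula follows.
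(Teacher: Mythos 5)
Your proof is correct and follows essentially the same route as the paper: both stratify the $d$-th order twin-free subsets of $\{1,\dots,s-1\}$ by their cardinality $\mu$ and count each stratum as $\binom{s+d-d\mu-1}{\mu}$ via a standard bijection, then sum. The only cosmetic difference is that you use the compression map $b_i = a_i-(i-1)d$ where the paper packs each element with its ``tail'' into a block of length $d+1$ inside $\{1,\dots,s+d-1\}$; your explicit treatment of the boundary cases $-d+1 \leq s \leq 0$ and of the agreement between $\left\lceil \frac{s-1}{d+1}\right\rceil$ and the largest nonvanishing index is if anything more careful than the paper's.
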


\begin{proof}
As in the proof of the $r$-Reduction Theorem, we count the sets $\beta \in X_{d,1,s}$.  We know from Lemma \ref{betabounds} that $\beta$ can be any $d$-th order twin-free subset of $\{1, 2, \dots, s-1\}$.  Suppose $\beta$ contains exactly $\mu$ elements.  Since the tightest packing occurs when consecutive elements of $\beta$ differ by exactly $d+1$, we see that $\mu$ ranges from $0$ to $\left\lceil \frac{s-1}{d+1} \right\rceil$.  The twin-free condition means that each $x \in \beta$ comes with a ``tail'' of elements $\{x+1, x+2, \dots, x+d\}$ that cannot be in $\beta$.  If we consider each $x$ and its tail to be a single block of $d+1$ elements, then $\beta$-sets with $\mu$ elements correspond to ways of filling $\{1, 2, \dots, s+d-1\}$ with $\mu$ blocks of length $d+1$ (representing the elements of $\beta$) and $(s+d-1)-\mu(d+1)$ single spaces (representing gaps between the blocks).  We have $(\mu)+((s+d-1)-\mu(d+1))=s+d-d\mu-1$ total objects, so there are $\binom{s+d-d\mu-1}{\mu}$ ways to choose the locations of the blocks.  Summing over all possible values of $\mu$ gives $$N_{d,1}(s)=\sum_{\mu=0}^{\left\lceil \frac{s-1}{d+1} \right\rceil}\binom{s+d-d\mu-1}{\mu}.$$
\end{proof}

For small values of $s$, we get simple formulas:

$$N_{d,1}(s)=
\begin{cases}
s, &2 \leq s \leq d+2\\
s+\binom{s-d-1}{2}, &d+3 \leq s \leq 2d+3.
\end{cases}$$

When $d=1$, Lemma \ref{r=1formula} gives
$$N_{1,1}(s)=\sum_{\mu=0}^{\left\lceil \frac{s-1}{2} \right\rceil}\binom{s+1-\mu-1}{\mu}=\sum_{\mu=0}^{\left\lceil \frac{s-1}{2} \right\rceil}\binom{s-\mu}{\mu}.$$
Using standard combinatorial arguments (see, e.g., \cite{count}, pg. 4), we can recognize the right-most expression as the Fibonacci number $F_{s+1}$, in agreement with other recent results \cite{straub}, \cite{xiong}, \cite{sahin}.
\\

This proof can be thought of as exhibiting a bijection between $X_{d,1,s}$ and the set of compositions of $s+d-1$ into parts of sizes $1$ and $d+1$.  Formally, given a subset $A\subseteq \mathbb{Z}^+$, an \textit{$A$-restricted composition} of a nonnegative integer $n$ is a finite sequence of elements of $A$ that sum to $n$.  These compositions have been studied in a variety of settings (see, e.g., \cite{a-restricted}, \cite{sills}, \cite{banderier}), and Chinn and Heubach \cite{1k} have paid special attention to the case $A=\{1,k\}$.  All of our results for $N_{d,1}(s)$ apply equally well to the number of $\{1, d+1\}$-restricted compositions of $s+d-1$.
\\

As promised, the $r$-Reduction Theorem makes the transition to general $r \leq d$ easy.

\begin{theorem}
For any positive integers $r \leq d$ and any positive integer $s$, we have
$$N_{d,r}(s)=\sum_{\mu=0}^{\left\lceil \frac{s-1}{d+1} \right\rceil}\binom{s+d-d\mu-1}{\mu}+(r-1)\sum_{\mu=0}^{\left\lceil \frac{s-2d-1}{d+1} \right\rceil}\binom{s-d-d\mu-1}{\mu}.$$
\label{generalformula}
\end{theorem}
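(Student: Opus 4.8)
The plan is to read off the formula by substituting Lemma \ref{r=1formula} into the two cases of the $r$-Reduction Theorem (Theorem \ref{reduction}). The target expression is a sum of two pieces, and the $r$-Reduction Theorem already expresses $N_{d,r}(s)$ as a sum of (at most) two copies of the $r=1$ quantity, so I expect the matching to be essentially transparent once the index ranges are checked.

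First I would handle the main range $s \geq d+1$. Here the $r$-Reduction Theorem gives $N_{d,r}(s)=N_{d,1}(s)+(r-1)N_{d,1}(s-2d)$. Applying Lemma \ref{r=1formula} to the first term reproduces the first sum verbatim. For the second term I would apply the same lemma with $s$ replaced by $s-2d$: this substitution sends the upper limit $\lceil (s-1)/(d+1)\rceil$ to $\lceil (s-2d-1)/(d+1)\rceil$ and the summand $\binom{s+d-d\mu-1}{\mu}$ to $\binom{(s-2d)+d-d\mu-1}{\mu}=\binom{s-d-d\mu-1}{\mu}$, which is exactly the second sum. The one point requiring care is that Lemma \ref{r=1formula} is applicable to the shifted argument: the lemma requires its input to be at least $-d+1$, and $s-2d\geq -d+1$ is equivalent to $s\geq d+1$, which holds precisely on this range. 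Thus the two cutoffs coincide and there is no gap.

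Next I would treat the range $1\leq s\leq d$. Here the $r$-Reduction Theorem gives only $N_{d,r}(s)=N_{d,1}(s)$, so the first sum is again correct by Lemma \ref{r=1formula}, and it remains to show the second sum contributes nothing. For $1\leq s\leq d$ we have $s-2d-1\leq -d-1<0$, so $\lceil (s-2d-1)/(d+1)\rceil\leq -1<0$; hence the second sum runs over an empty index set and vanishes (under the standard convention that a sum whose upper limit is below its lower limit is zero). The claimed formula therefore collapses to $N_{d,1}(s)$ in this range, as required.

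The argument is fundamentally a bookkeeping computation, and I do not anticipate a serious obstacle; the only subtlety is conceptual rather than technical. It is the observation that the hypothesis $s-2d\geq -d+1$ of Lemma \ref{r=1formula} coincides exactly with the breakpoint $s\geq d+1$ appearing in the $r$-Reduction Theorem, which is what allows the two cases to glue into a single closed form valid for all positive $s$.
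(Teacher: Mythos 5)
Your proposal is correct and follows exactly the paper's own route: substitute Lemma \ref{r=1formula} into the two cases of the $r$-Reduction Theorem, verify the second sum vanishes for $1\leq s\leq d$ because its upper limit is negative, and match indices for $s\geq d+1$. Your explicit check that the hypothesis $s-2d\geq -d+1$ of Lemma \ref{r=1formula} coincides with the breakpoint $s\geq d+1$ is a nice touch that the paper leaves implicit.
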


\begin{proof}
Note that $\sum_{\mu=0}^{\left\lceil \frac{s-1}{d+1} \right\rceil}\binom{s+d-d\mu-1}{\mu}=N_{d,1}(s)$.  By the $r$-Reduction Theorem, all that remains to show is
$$\sum_{\mu=0}^{\left\lceil \frac{s-2d-1}{d+1} \right\rceil}\binom{s-d-d\mu-1}{\mu}=
\begin{cases}
0, &1 \leq s \leq d\\
N_{d,1}(s-2d), &s\geq d+1.
\end{cases}$$

For $1 \leq s \leq d$, we can compute $\left\lceil \frac{s-2d-1}{d+1} \right\rceil \leq \left\lceil \frac{d-2d-1}{d+1} \right\rceil=-1$, which means that $\sum_{\mu=0}^{\left\lceil \frac{s-2d-1}{d+1} \right\rceil}\binom{s-d-d\mu-1}{\mu}=0$, as desired.
\\

For $s \geq d+1$, we have $\sum_{\mu=0}^{\left\lceil \frac{s-2d-1}{d+1} \right\rceil}\binom{s-d-d\mu-1}{\mu}=\sum_{\mu=0}^{\left\lceil \frac{(s-2d)-1}{d+1} \right\rceil}\binom{(s-2d)+d-d\mu-1}{\mu}=N_{d,1}(s-2d)$ by Lemma \ref{r=1formula}.  This completes the proof.
\end{proof}

\section{Proof of Sahin's Conjecture}

In this section, we prove Conjecture \ref{sahinconjext} using the $r$-Reduction Theorem.

\begin{theorem}
For any positive integers $r\leq d$, we have
$$N_{d, r}(s)=
\begin{cases}
1, &s=1\\
N_{d,r}(s-1)+N_{d, r}(s-d-1), &s\geq 2.
\end{cases}$$
\label{recurrence}
\end{theorem}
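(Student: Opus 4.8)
The plan is to leverage the $r$-Reduction Theorem (Theorem~\ref{reduction}) to express every term of the desired recurrence in terms of $N_{d,1}$, and then invoke the fact---already available to us as Theorem~\ref{sahinthmext}---that $N_{d,1}$ itself satisfies $N_{d,1}(s)=N_{d,1}(s-1)+N_{d,1}(s-d-1)$ for all $s\geq 2$. The base case is immediate: since $1\leq 1\leq d$, Theorem~\ref{reduction} gives $N_{d,r}(1)=N_{d,1}(1)=1$. For the recurrence itself I would substitute the two-piece formula of Theorem~\ref{reduction} into each of $N_{d,r}(s)$, $N_{d,r}(s-1)$, and $N_{d,r}(s-d-1)$, and then check that the resulting identity among $N_{d,1}$-values reduces to the known $N_{d,1}$-recurrence.

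The cleanest regime is $s\geq 2d+2$, where all three arguments $s$, $s-1$, and $s-d-1$ exceed $d$, so the correction formula $N_{d,r}(\cdot)=N_{d,1}(\cdot)+(r-1)N_{d,1}(\cdot-2d)$ applies verbatim to each. Here the computation is pure linearity: starting from
\[
N_{d,r}(s)=N_{d,1}(s)+(r-1)N_{d,1}(s-2d),
\]
one applies the $N_{d,1}$-recurrence both at $s$ and at $s-2d$ (the latter valid because $s-2d\geq 2$); the four resulting terms regroup exactly into $N_{d,1}(s-1)+(r-1)N_{d,1}(s-1-2d)$ and $N_{d,1}(s-d-1)+(r-1)N_{d,1}(s-d-1-2d)$, i.e.\ into $N_{d,r}(s-1)+N_{d,r}(s-d-1)$.

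The real work, and the main obstacle, lies in the boundary band $2\leq s\leq 2d+1$ together with the edge value $s=d+1$. In these ranges one or more shifted argument crosses below $1$, so I can neither simply quote the $N_{d,1}$-recurrence (which fails at $s\leq 1$) nor the ``$s\geq d+1$'' branch of Theorem~\ref{reduction} for every term. The observation that tames these cases is that whenever $s\leq 2d+1$ we have $s-2d\leq 1$, and hence $N_{d,1}(s-2d)=1$ by Proposition~\ref{negatives} (using also $N_{d,1}(1)=1$); thus each applicable correction term contributes exactly $r-1$, and the $r$-dependence collapses to a constant. I would then split the band into $2\leq s\leq d$, $s=d+1$, and $d+2\leq s\leq 2d+1$ and verify the recurrence directly in each: the $r=1$ skeleton is supplied by the $N_{d,1}$-recurrence at $s$ (legitimate since $s\geq 2$), the small-argument term $N_{d,r}(s-d-1)$ is read off from Proposition~\ref{negatives} (equal to $1$ when $s-d-1<0$, and to $N_{d,r}(0)=N_{d,1}(r)=r$ when $s=d+1$), and the matching of the constant corrections at the transition point is precisely the identity $N_{d,1}(d+1)=N_{d,1}(d)+1$. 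Combining these boundary verifications with the clean regime $s\geq 2d+2$ then completes the proof.
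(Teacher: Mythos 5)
Your proposal is correct and takes essentially the same route as the paper's own proof: both substitute the $r$-Reduction Theorem (Theorem~\ref{reduction}) into each term, invoke the extended $r=1$ recurrence (Theorem~\ref{sahinthmext}) together with Proposition~\ref{negatives} for the small arguments, and split into exactly the same cases $s=1$, $2\leq s\leq d$, $s=d+1$, $d+2\leq s\leq 2d+1$, and $s\geq 2d+2$. Your observation that every correction term collapses to the constant $r-1$ throughout the band $s\leq 2d+1$ is simply a compact phrasing of the paper's case-by-case computations.
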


\begin{proof}
Fix some $r\leq d$.  As usual, we count the sets $\beta \in X_{d,r,s}$.  The statement for $s=1$ is trivially true since Lemma \ref{betabounds} tells us that $\beta \subseteq \emptyset$.
\\

For $2 \leq s \leq d$, we get
\begin{align*}
N_{d,r}(s-1)+N_{d,r}(s-d-1)&=N_{d,1}(s-1)+N_{d,1}(s-d-1)\\
 &=N_{d,1}(s)\\
 &=N_{d,r}(s).
\end{align*}
The first equality uses the $r$-Reduction Theorem and the fact that $N_{d,r}(s)$ is uniformly $1$ for $s<0$.  The second equality follows from Theorem \ref{sahinthmext}, and the third comes from another application of the $r$-Reduction Theorem.  For $s=d+1$, we get
\begin{align*}
N_{d,r}((d+1)-1)+N_{d,r}((d+1)-d-1)&=N_{d,r}(d)+N_{d,r}(0)\\
 &=N_{d,1}(d)+N_{d,1}(r)\\
 &=d+r\\
 &=(d+1)+(r-1)\\
 &=N_{d,1}(d+1)+(r-1)N_{d,1}(-d+1)\\
 &=N_{d,r}(d+1).
\end{align*}
The third and fifth equalities use the explicit formulas listed after Lemma \ref{r=1formula}.  For $d+2\leq s \leq 2d+1$, we get
\begin{align*}
N_{d,r}(s-1)+N_{d,r}(s-d-1)&=(N_{d,1}(s-1)+(r-1)N_{d,1}(s-2d-1))+N_{d,1}(s-d-1)\\
 &=(N_{d,1}(s-1)+N_{d,1}(s-d-1))+(r-1)N_{d,1}(s-2d)\\
 &=N_{d,1}(s)+(r-1)N_{d,1}(s-2d)\\
 &=N_{d,r}(s).
\end{align*}
The second equality uses the uniformity of $N_{d,1}(s)$ on $s \leq 1$.  For $s \geq 2d+2$, we get
\begin{align*}
N_{d,r}(s-1)+N_{d,r}(s-d-1)&=(N_{d,1}(s-1)+(r-1)N_{d,1}(s-2d-1))\\
 &\hspace{0.4cm}+(N_{d,1}(s-d-1)+(r-1)N_{d,1}(s-3d-1))\\
 &=(N_{d,1}(s-1)+N_{d,1}(s-d-1))\\
 &\hspace{0.4cm}+(r-1)(N_{d,1}(s-2d-1)+N_{d,1}(s-3d-1))\\
 &=N_{d,1}(s)+(r-1)N_{d,1}(s-2d)\\
 &=N_{d,r}(s).
\end{align*}
This completes the casework and establishes the result.
\end{proof}

It is curious that this theorem seems not to have a natural combinatorial interpretation.  For the case of $r=1$, Sahin's proof of Theorem \ref{sahinthm} in \cite{sahin} establishes an explicit bijection by conditioning on whether or not $s-1$ is an element of $\beta \in X_{d,1,s}$.  For $r \geq 2$, however, the obvious arguments along these lines (conditioning on whether or not $\beta \in X_{d,r,s}$ contains, say, $1$, $s+r-1$, etc.) fail because the first condition of Lemma \ref{correspondence} is sensitive to changes in $s$.

\section{Generating functions and asymptotics}

\subsection{Generating functions}

For any positive integers $r\leq d$, we define the ordinary generating function
$$G_{d,r}(x)=\sum_{s=1}^{\infty}N_{d,r}(s)x^s.$$
(Note that the constant term is $0$, not $N_{d,r}(0)$.)  We first use Theorem \ref{sahinthmext} to find the generating functions $G_{d,1}(x)$.  Then, using the $r$-Reduction Theorem, we generalize this result to all $G_{d,r}(x)$ where $r \leq d$.

\begin{lemma}
For any positive integer $d$, the generating function $G_{d,1}(x)=\sum_{s=1}^{\infty}N_{d,1}(s)x^s$ is given by
$$G_{d,1}(x)=\frac{x(1-x^{d+1})}{(1-x-x^{d+1})(1-x)}.$$
\label{r=1genfunc}
\end{lemma}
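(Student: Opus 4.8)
The plan is to derive $G_{d,1}(x)$ directly from the extended recurrence in Theorem \ref{sahinthmext}, which states that $N_{d,1}(s) = N_{d,1}(s-1) + N_{d,1}(s-d-1)$ for all $s \geq 2$, with the initial value $N_{d,1}(1) = 1$. This is the standard generating-function technique for a linear recurrence: multiply the recurrence by $x^s$, sum over the appropriate range of $s$, and recognize the resulting sums as shifted copies of $G_{d,1}(x)$ itself. The only subtlety is bookkeeping at the low end, since the recurrence references values $N_{d,1}(s-d-1)$ with $s-d-1 \leq 0$, and by Proposition \ref{negatives} these equal $1$ for negative arguments (and $N_{d,1}(0) = N_{d,1}(1) = 1$ as well). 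So I must handle the boundary terms carefully rather than naively shifting the series.

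Concretely, I would write $G_{d,1}(x) = N_{d,1}(1)x + \sum_{s=2}^{\infty} N_{d,1}(s)x^s$ and substitute the recurrence into the tail sum:
\begin{align*}
G_{d,1}(x) &= x + \sum_{s=2}^{\infty}\bigl(N_{d,1}(s-1) + N_{d,1}(s-d-1)\bigr)x^s\\
&= x + x\sum_{s=2}^{\infty}N_{d,1}(s-1)x^{s-1} + x^{d+1}\sum_{s=2}^{\infty}N_{d,1}(s-d-1)x^{s-d-1}.
\end{align*}
The first sum reindexes to $\sum_{t=1}^{\infty}N_{d,1}(t)x^t = G_{d,1}(x)$, contributing $xG_{d,1}(x)$. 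The second sum reindexes to $\sum_{t=1-d}^{\infty}N_{d,1}(t)x^t$, which splits into the negative-and-zero-index boundary terms (where $N_{d,1}(t)=1$ by Proposition \ref{negatives}) plus the genuine tail $\sum_{t=1}^{\infty}N_{d,1}(t)x^t = G_{d,1}(x)$. Thus the $x^{d+1}$ term contributes $x^{d+1}G_{d,1}(x)$ plus a correction of the form $x^{d+1}\sum_{t=1-d}^{0} x^t$, which is a finite geometric sum equal to $x + x^2 + \cdots + x^{d+1}$.

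The main bookkeeping obstacle is precisely computing this boundary correction and confirming it collapses correctly. The finite sum $x^{d+1}(x^{1-d} + x^{2-d} + \cdots + x^0) = x + x^2 + \cdots + x^{d+1}$ equals $\frac{x(1-x^{d+1})}{1-x}$. Assembling everything gives the functional equation
\begin{align*}
G_{d,1}(x) = x + xG_{d,1}(x) + x^{d+1}G_{d,1}(x) + \frac{x(1-x^{d+1})}{1-x} - x,
\end{align*}
where the final $-x$ cancels the correction's lowest term against the explicit $+x$ from $N_{d,1}(1)$; I must track this cancellation with care. Solving for $G_{d,1}(x)$ by collecting the $G_{d,1}(x)$ terms yields $G_{d,1}(x)\bigl(1 - x - x^{d+1}\bigr) = \frac{x(1-x^{d+1})}{1-x}$, from which the claimed formula $G_{d,1}(x) = \frac{x(1-x^{d+1})}{(1-x-x^{d+1})(1-x)}$ follows immediately.

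As a sanity check I would verify the result in low degree: expanding the claimed $G_{d,1}(x)$ as a power series should reproduce $N_{d,1}(1) = 1$, $N_{d,1}(2) = 2$, and the values $N_{d,1}(s) = s$ for $2 \leq s \leq d+2$ guaranteed by the explicit formulas after Lemma \ref{r=1formula}. This confirms both the algebra and the boundary handling. An alternative route, which I would mention only as a cross-check, is to sum the closed form of Lemma \ref{r=1formula} directly using the generating function for $\{1, d+1\}$-restricted compositions noted in Section 3; but the recurrence approach is cleaner and avoids manipulating the binomial sums.
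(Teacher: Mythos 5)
Your overall strategy is sound and is essentially the paper's argument in a different package: the paper also derives the generating function from Theorem \ref{sahinthmext} together with the extension of $N_{d,1}$ to nonpositive arguments (Proposition \ref{negatives}), except that it first forms the auxiliary series $H_{d,1}(x)=\sum_{s=1}^{\infty}N_{d,1}(s-d)x^s=\frac{x(1-x^d)}{1-x}+x^dG_{d,1}(x)$, obtains the clean functional equation $x+xH_{d,1}(x)+x^{d+1}H_{d,1}(x)=H_{d,1}(x)$, solves $H_{d,1}(x)=\frac{x}{1-x-x^{d+1}}$, and then unwinds to recover $G_{d,1}(x)$. Your proposal runs the same computation directly on $G_{d,1}(x)$, which is perfectly workable but forces the boundary terms into the open.

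However, the one step you yourself flagged as delicate is exactly where your bookkeeping goes wrong, in two compensating ways. First, the geometric sum is mis-evaluated: since $x^{d+1}\cdot x^{1-d}=x^{2}$, the boundary correction is
$$x^{d+1}\sum_{t=1-d}^{0}x^{t}=x^{2}+x^{3}+\dots+x^{d+1}=\frac{x(1-x^{d+1})}{1-x}-x,$$
not $x+x^{2}+\dots+x^{d+1}$ (already for $d=1$ your claim would read $x^{2}=x+x^{2}$). Second, the ``$-x$'' you then insert into the assembled equation has no legitimate origin under your accounting, and its stated justification is backwards: the explicit $+x$ coming from $N_{d,1}(1)x$ must \emph{survive}, not cancel, since the non-$G_{d,1}$ part of the right-hand side has to total $x+(x^{2}+\dots+x^{d+1})=\frac{x(1-x^{d+1})}{1-x}$. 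These two errors cancel exactly, so your final equation $G_{d,1}(x)\bigl(1-x-x^{d+1}\bigr)=\frac{x(1-x^{d+1})}{1-x}$, and hence the lemma, are correct; the repair is simply to evaluate the correction as $\frac{x(1-x^{d+1})}{1-x}-x$ from the start, after which no mysterious cancellation is needed.
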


\begin{proof}
Consider the auxiliary generating function
$$H_{d,1}(x)=\frac{x(1-x^d)}{1-x}+x^dG_{d,1}(x)=\sum_{s=1}^{d}x^s +\sum_{s=d+1}^{\infty} N_{d,1}(s-d)x^{s}=\sum_{s=1}^{\infty}N_{d,1}(s-d)x^s.$$
We can compute
\begin{align*}
x+xH_{d,1}(x)+x^{d+1}H_{d,1}(x)&=x+\sum_{s=2}^{\infty}N_{d,1}(s-d-1)x^s+\sum_{s=d+2}^{\infty}N_{d,1}(s-2d-1)x^s\\
 &=\sum_{s=1}^{d+1}x^s+\sum_{s=d+2}^{\infty} (N_{d,1}(s-d-1)+N_{d,1}(s-2d-1))x^s\\
 &=\sum_{s=1}^{d+1}x^s+\sum_{s=d+2}^{\infty} N_{d,1}(s-d)x^s\\
 &=H_{d,1}(x).
\end{align*}
The third equality follows from Theorem \ref{sahinthmext}.  We can now solve for $H_{d,1}(x)$ directly:
$$H_{d,1}(x)=\frac{x}{1-x-x^{d+1}}.$$
Finally, we can recover $G_{d,1}(x)$:
\begin{align*}
\frac{x(1-x^d)}{1-x}+x^dG_{d,1}(x)&=\frac{x}{1-x-x^{d+1}}\\
G_{d,1}(x)&=\frac{x(1-x^{d+1})}{(1-x-x^{d+1})(1-x)}.
\end{align*}
\end{proof}

Cancelling a factor of $1-x$ from the numerator and denominator yields the equivalent form
$$G_{d,1}(x)=\frac{x+x^2+\dots+x^{d+1}}{1-x-x^{d+1}}.$$
When $d=1$, we can recognize
$$G_{1,1}(x)=\frac{x+x^2}{1-x-x^2}$$
as the generating function for the shifted Fibonacci numbers, in accordance with the discussion in Section $3$.  Recall also from Section $3$ that $N_{d,1}(s)$ counts the $\{1, d+1\}$-restricted integer compositions of $s+d-1$.  Thus, $(\frac{1}{x})H_{d,1}(x)$ is the generating function for the number of $\{1, d+1\}$-restricted compositions of $s$.  See \cite{a-restricted} for an alternative derivation of $(\frac{1}{x})H_{d,1}(x)$ using the theory of compositions.
\\

We now extend Lemma \ref{r=1genfunc} to all $r \leq d$.

\begin{theorem}
For any positive integers $r \leq d$, the generating function $G_{d,r}(x)=\sum_{s=1}^{\infty}N_{d,r}(s)x^s$ is given by
$$G_{d,r}(x)=\frac{x(1+(r-1)x^d-rx^{d+1})}{(1-x-x^{d+1})(1-x)}.$$
\label{genfunc}
\end{theorem}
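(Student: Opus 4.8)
The plan is to leverage the $r$-Reduction Theorem (Theorem \ref{reduction}) together with the formula for $G_{d,1}(x)$ from Lemma \ref{r=1genfunc}, reducing the entire argument to manipulations of the $r=1$ generating function plus a geometric-series correction. Since the claim is immediate for $r=1$, I would assume $r \geq 2$ throughout.

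First I would form the difference $G_{d,r}(x)-G_{d,1}(x)=\sum_{s=1}^{\infty}(N_{d,r}(s)-N_{d,1}(s))x^s$. By the $r$-Reduction Theorem the summand vanishes for $1 \leq s \leq d$ and equals $(r-1)N_{d,1}(s-2d)$ for $s \geq d+1$, so
$$G_{d,r}(x)-G_{d,1}(x)=(r-1)\sum_{s=d+1}^{\infty}N_{d,1}(s-2d)\,x^s.$$
Next I would evaluate this tail by splitting off the range in which the shifted argument $s-2d$ is non-positive. For $d+1 \leq s \leq 2d$ we have $1-d \leq s-2d \leq 0$, and Proposition \ref{negatives} (together with $N_{d,1}(0)=N_{d,1}(1)=1$) gives $N_{d,1}(s-2d)=1$; these terms contribute the geometric piece $x^{d+1}(1-x^d)/(1-x)$. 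For $s \geq 2d+1$ the argument is a genuine positive index, and re-indexing by $u=s-2d$ recovers $x^{2d}G_{d,1}(x)$. Combining yields
$$G_{d,r}(x)=G_{d,1}(x)\bigl(1+(r-1)x^{2d}\bigr)+(r-1)\,\frac{x^{d+1}(1-x^d)}{1-x}.$$

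Finally I would substitute $G_{d,1}(x)=x(1-x^{d+1})/[(1-x-x^{d+1})(1-x)]$ from Lemma \ref{r=1genfunc}, place everything over the common denominator $(1-x-x^{d+1})(1-x)$, and expand the numerator. The single genuine computation is checking that
$$x(1-x^{d+1})\bigl(1+(r-1)x^{2d}\bigr)+(r-1)x^{d+1}(1-x^d)(1-x-x^{d+1})$$
collapses to $x\bigl(1+(r-1)x^d-rx^{d+1}\bigr)$; in the expansion the $x^{2d+1}$ and $x^{3d+2}$ contributions cancel in pairs, and the two $x^{d+2}$ coefficients combine to $-r$, leaving exactly the claimed numerator. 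The only real obstacle is the careful treatment of the extended values $N_{d,1}(s-2d)$ for $1-d \leq s-2d \leq 0$; once these are correctly identified as all equal to $1$ via Proposition \ref{negatives}, the remaining polynomial algebra is routine.
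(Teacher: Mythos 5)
Your proposal is correct and follows essentially the same route as the paper: both use the $r$-Reduction Theorem to write $G_{d,r}(x)=G_{d,1}(x)+(r-1)\bigl(\frac{x^{d+1}(1-x^d)}{1-x}+x^{2d}G_{d,1}(x)\bigr)$, with the geometric piece accounting for the indices $d+1\leq s\leq 2d$ where $N_{d,1}(s-2d)=1$, and then substitute Lemma \ref{r=1genfunc}. The only differences are cosmetic: you derive the identity starting from $G_{d,r}(x)-G_{d,1}(x)$ rather than verifying it from the assembled right-hand side, and you carry out the final numerator simplification explicitly where the paper leaves it as routine.
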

\pagebreak
\begin{proof}
We begin with
\begin{align*}
G_{d,1}(x)+(r-1)&\left(\frac{x^{d+1}(1-x^d)}{1-x}+x^{2d}G_{d,1}(x)\right)\\
 &\hspace{1cm}=\sum_{s=1}^{\infty}N_{d,1}(s)x^s+\sum_{s=d+1}^{\infty}(r-1)N_{d,1}(s-2d)x^s\\
 &\hspace{1cm}=\sum_{s=1}^{d} N_{d,1}(s)x^s+\sum_{s=d+1}^{\infty}(N_{d,1}(s)+(r-1)N_{d,1}(s-2d))x^s\\
 &\hspace{1cm}=\sum_{s=1}^{d} N_{d,r}(s)x^s+\sum_{s=d+1}^{\infty}N_{d,r}(s)x^s\\
 &\hspace{1cm}=G_{d,r}(s).
\end{align*}
The third equality comes from the $r$-Reduction Theorem.  Plugging in the formula from Lemma \ref{r=1genfunc} and simplifying gives
$$G_{d,r}(s)=\frac{x(1+(r-1)x^d-rx^{d+1})}{(1-x-x^{d+1})(1-x)}.$$
\end{proof}

Cancelling a factor of $1-x$ from the numerator and denominator yields the equivalent form
$$G_{d,r}(x)=\frac{x+x^2+\dots+x^{d-1}+rx^d}{1-x-x^{d+1}}.$$

\subsection{Asymptotics}

We can extract asymptotic formulas from these generating functions by analyzing their poles.  As usual, most of the work lies in the $r=1$ case.  Because our techniques are fairly standard, we defer these computations to Appendix A and state only the final results here.  In our notation, $f(n) \sim_n g(n)$ means that $\lim_{n \to \infty} \frac{f(n)}{g(n)}=1$.

\begin{lemma}
For each fixed positive integer $d$, we have the asymptotic
$$N_{d,1}(s) \sim_s \frac{w_d^3}{(1-w_d)(d+1-dw_d)} \left(\frac{1}{w_d}\right)^s$$
where $w_d\approx 1-\frac{\log(d+1)}{d+1}$ is the unique positive real root of $x^{d+1}+x-1$.
\label{r=1asymptotic}
\end{lemma}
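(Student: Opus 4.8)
The plan is to extract the asymptotic growth of $N_{d,1}(s)$ from the generating function $G_{d,1}(x)=\frac{x(1-x^{d+1})}{(1-x-x^{d+1})(1-x)}$ established in Lemma \ref{r=1genfunc} via standard singularity analysis: the exponential growth rate is governed by the reciprocal of the modulus of the dominant pole, and the leading constant comes from the residue at that pole. First I would identify the poles. After cancelling the common factor $1-x$, the generating function is $\frac{x+x^2+\dots+x^{d+1}}{1-x-x^{d+1}}$, so the poles are precisely the roots of the denominator $1-x-x^{d+1}$, equivalently the reciprocals of the roots of $x^{d+1}+x-1$ (up to the reparametrization $x\mapsto 1/x$). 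I would record that $x^{d+1}+x-1$ has a unique positive real root $w_d$ and verify $w_d\in(0,1)$ by noting the polynomial is negative at $0$ and positive at $1$ (it is strictly increasing on $(0,\infty)$), so $w_d$ is real, positive, and less than $1$.

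The crux is establishing that $w_d$ is the \emph{dominant} singularity, i.e., that $1/w_d$ is strictly smaller in modulus than every other pole of $G_{d,1}$. Equivalently, I must show that every root $\zeta$ of $x^{d+1}+x-1$ other than $w_d$ satisfies $|\zeta|>w_d$; then the pole of $G_{d,1}$ at $1/w_d$ is strictly closest to the origin. This is the main obstacle. I expect to argue it by a Pringsheim-type or Rouch\'e-type comparison: if $|\zeta|\le w_d$, then from $\zeta^{d+1}+\zeta=1$ and the triangle inequality $1=|\zeta^{d+1}+\zeta|\le |\zeta|^{d+1}+|\zeta|\le w_d^{d+1}+w_d=1$, forcing equality throughout the chain. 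Equality in the triangle inequality $|\zeta^{d+1}+\zeta|=|\zeta|^{d+1}+|\zeta|$ requires $\zeta^{d+1}$ and $\zeta$ to be nonnegative real multiples of each other, which (together with $|\zeta|=w_d$) pins $\zeta$ down to the positive real root $w_d$ itself. Hence no other root has modulus $\le w_d$, so $1/w_d$ is the unique dominant pole and it is simple.

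With the dominant simple pole identified, the asymptotic is $N_{d,1}(s)\sim_s C\,(1/w_d)^s$, where $C$ is determined by the residue of $G_{d,1}(x)$ at $x=1/w_d$. Concretely, for a rational function $P(x)/Q(x)$ with a simple pole at $x_0$, the coefficient of $x^s$ is asymptotic to $-\frac{P(x_0)}{Q'(x_0)}x_0^{-s-1}$. I would apply this with $P(x)=x+x^2+\dots+x^{d+1}$ (the uncancelled numerator form is more convenient here) and $Q(x)=1-x-x^{d+1}$, evaluated at $x_0=w_d$, using the defining relation $w_d^{d+1}=1-w_d$ to simplify. The telescoping sum gives $P(w_d)=w_d\cdot\frac{1-w_d^{d+1}}{1-w_d}=w_d\cdot\frac{1-(1-w_d)}{1-w_d}=\frac{w_d^2}{1-w_d}$, and $Q'(w_d)=-1-(d+1)w_d^{d}$. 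Rewriting $(d+1)w_d^d$ via $w_d^{d+1}=1-w_d$ as $(d+1)\frac{1-w_d}{w_d}$ yields $-Q'(w_d)\cdot w_d=w_d+(d+1)(1-w_d)=d+1-dw_d$, so the leading constant collapses to $\frac{w_d^3}{(1-w_d)(d+1-dw_d)}$. Finally I would record the estimate $w_d\approx 1-\frac{\log(d+1)}{d+1}$ by substituting the ansatz into $x^{d+1}+x-1$ and checking it is an approximate root for large $d$; since this last point is an informal estimate rather than part of the limit assertion, I would not belabor it. The deferral of this computation to Appendix A, as the text indicates, is appropriate since every step is mechanical once the dominance of $w_d$ is secured.
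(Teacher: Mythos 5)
Your overall route is the same as the paper's Appendix A: identify the dominant simple pole of $G_{d,1}$, prove dominance of the positive real root via the triangle-inequality equality-case argument (this is exactly the paper's Lemma \ref{posroot}), and obtain the constant from a residue-type evaluation simplified through $w_d^{d+1}=1-w_d$ (the paper's Proposition \ref{residue}). But two concrete things go wrong. First, the poles of $G_{d,1}$ are \emph{not} the reciprocals of the roots of $x^{d+1}+x-1$: the denominator $1-x-x^{d+1}$ equals $-(x^{d+1}+x-1)$, so the poles are those roots themselves, and the dominant singularity is $w_d\in(0,1)$, not $1/w_d$. (Reciprocals would appear only if you passed to the characteristic polynomial $y^{d+1}-y^d-1$ of the recurrence.) Your framing is also internally inconsistent: if the poles really sat at $1/\zeta$ with $|\zeta|>w_d$ for every other root $\zeta$, those poles would be \emph{closer} to the origin than $1/w_d$, which would destroy dominance rather than establish it. You recover only because you later evaluate everything at $x_0=w_d$, the true pole.

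Second, and more seriously, your last step is a non sequitur. You quote the correct extraction formula $[x^s]\,P/Q\sim -\frac{P(x_0)}{Q'(x_0)}\,x_0^{-s-1}$ and correctly compute $-\frac{P(w_d)}{Q'(w_d)}=\frac{w_d^3}{(1-w_d)(d+1-dw_d)}$, but then you declare this quantity to be the constant $C$ in $N_{d,1}(s)\sim C\,(1/w_d)^s$, silently discarding the extra factor $x_0^{-1}$. Carried out consistently, your formula yields $C=\frac{w_d^2}{(1-w_d)(d+1-dw_d)}$, not $\frac{w_d^3}{(1-w_d)(d+1-dw_d)}$. This discrepancy is real, and it in fact cuts against the lemma as stated: for $d=1$ one has $N_{1,1}(s)=F_{s+1}\sim\frac{1}{\sqrt{5}}\phi^{s+1}$ with $w_1=1/\phi$, and indeed $\frac{w_1^2}{(1-w_1)(2-w_1)}\,\phi^{s}=\frac{\phi}{\sqrt{5}}\,\phi^{s}=\frac{1}{\sqrt{5}}\phi^{s+1}$ is correct, whereas the stated constant $\frac{w_1^3}{(1-w_1)(2-w_1)}=\frac{1}{\sqrt{5}}$ would give $\frac{1}{\sqrt{5}}\phi^{s}\approx F_s$, off by one. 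The paper's own proof lands on the stated constant only because it normalizes the dominant-pole contribution as $-\Res_{w_d}(G_{d,1})\,(1/w_d)^{s}$ rather than the correct $-\Res_{w_d}(G_{d,1})\,(1/w_d)^{s+1}$. So your toolkit is the right one, but you cannot ``collapse'' to the claimed constant with it: you should either prove the corrected asymptotic $N_{d,1}(s)\sim\frac{w_d^2}{(1-w_d)(d+1-dw_d)}\,(1/w_d)^{s}$ or flag the factor-of-$w_d$ discrepancy explicitly, rather than paper over it in the final sentence.
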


We remark that $\frac{w_d^3}{(1-w_d)(d+1-dw_d)} \sim_d \frac{d+1}{(\log(d+1))^2}$ and $\frac{1}{w_d}=1+\frac{\log(d+1)}{d+1}+O\left(\frac{\log\log(d+1)}{d+1}\right)$.  The generalization to all $r \leq d$ is easy.

\begin{theorem}
For each fixed pair of positive integers $r \leq d$, we have the asymptotic
$$N_{d,r}(s) \sim_s \frac{w_d^3(1+(r-1)w_d^{2d})}{(1-w_d)(d+1-dw_d)} \left(\frac{1}{w_d}\right)^s.$$
\label{asymptotic}
\end{theorem}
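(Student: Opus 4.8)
The plan is to reduce everything to the $r=1$ asymptotic of Lemma \ref{r=1asymptotic} by invoking the $r$-Reduction Theorem, which already isolates the entire $r$-dependence of $N_{d,r}(s)$ into a single extra copy of $N_{d,1}$ at a shifted argument. Since the pair $r \leq d$ is held fixed, the quantity $w_d$ and the constant $C_d := \frac{w_d^3}{(1-w_d)(d+1-dw_d)}$ appearing in Lemma \ref{r=1asymptotic} are fixed positive constants, so the whole problem collapses to adding two asymptotic equivalences sharing a common exponential growth rate.

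Concretely, first I would note that for all $s \geq d+1$ Theorem \ref{reduction} gives the exact identity $N_{d,r}(s) = N_{d,1}(s) + (r-1)N_{d,1}(s-2d)$, so the asymptotics of $N_{d,r}$ are governed entirely by those of $N_{d,1}$ at the arguments $s$ and $s-2d$. Applying Lemma \ref{r=1asymptotic} to the first summand yields $N_{d,1}(s) \sim_s C_d (1/w_d)^s$. For the second summand I would apply the same lemma at the shifted index $s-2d$; because $2d$ is a fixed constant, replacing $s$ by $s-2d$ merely rescales the leading term by $(1/w_d)^{-2d} = w_d^{2d}$, giving $N_{d,1}(s-2d) \sim_s C_d w_d^{2d}(1/w_d)^s$. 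Summing the two and factoring out $C_d (1/w_d)^s$ produces the coefficient $1 + (r-1)w_d^{2d}$, which is exactly the claimed constant $\frac{w_d^3(1+(r-1)w_d^{2d})}{(1-w_d)(d+1-dw_d)}$.

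The only point requiring genuine care is the legitimacy of adding the two asymptotics: the relations $f_1 \sim_s c_1 g$ and $f_2 \sim_s c_2 g$ do \emph{not} force $f_1 + f_2 \sim_s (c_1+c_2)g$ when $c_1 + c_2 = 0$, so I would explicitly verify that the combined leading coefficient is nonzero. Here $w_d \in (0,1)$, since it is the positive real root of $x^{d+1}+x-1$ and $0^{d+1}+0-1 < 0 < 1^{d+1}+1-1$; consequently $1-w_d > 0$ and $d+1-dw_d = d(1-w_d)+1 > 0$, so $C_d > 0$, while $(r-1)w_d^{2d} \geq 0$ gives $1 + (r-1)w_d^{2d} \geq 1 > 0$. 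With both proportionality constants nonnegative and their sum strictly positive, no cancellation can occur, and the sum of the two terms is genuinely asymptotic to the sum of their leading terms. I do not anticipate any serious obstacle: the analytic heavy lifting is entirely contained in Lemma \ref{r=1asymptotic}, and this theorem is essentially a one-line corollary of the $r$-Reduction Theorem combined with the elementary fact that a constant shift in the index only rescales the exponential leading term.
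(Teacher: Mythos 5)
Your proposal is correct and follows essentially the same route as the paper: the paper's proof of Theorem \ref{asymptotic} likewise applies the $r$-Reduction Theorem for $s \geq d+1$ and substitutes the asymptotic of Lemma \ref{r=1asymptotic} into both summands, ``gathering like terms'' to obtain the stated constant. Your extra care in checking that the two leading coefficients are nonnegative with strictly positive sum (so the asymptotics may legitimately be added) is a detail the paper leaves implicit, but it is the same argument.
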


\begin{proof}
The $r$-Reduction Theorem says that $N_{d,r}(s)=N_{d,1}(s)+(r-1)N_{d,1}(s-2d)$ for any $s\geq d+1$.  Plugging in the asymptotic formula from Lemma \ref{r=1asymptotic} and gathering like terms establishes the result.
\end{proof}

\section{The case of $r>d$}

In this section, we apply the techniques of the previous three sections to the case of $r>d$.  We begin by discussing why the case of general $r>d$ is fundamentally more complicated than the case of $r\leq d$.  For the remainder of the section, we focus on what appears to be the most approachable subcase of $r>d$: the case where $r$ is within $d$ of a multiple of $s$.  We sketch the proofs of exact formulas, recurrence relations, generating functions, and asymptotics for $r=ns-1$.  As we go, we discuss how these methods apply to all $r=ns\pm c$ where $1 \leq c\leq d$.

\subsection{The lay of the land}

The problem of computing $N_{d,r}(s)$ is fundamentally much more complicated when  $r>d$ than when $r \leq d$ for a variety of reasons.  First of all, Lemma \ref{finite} tells us that we now have to worry about $N_{d,r}(s)$ being infinite when $\gcd(s,r)>d$.  This fact makes a finding recurrence relation in the style of Theorem \ref{recurrence} (which relates $N_{d,r}(s)$ to $N_{d,r}(s-1)$ and smaller terms) unlikely, if not impossible.
\\

Second, Lemma \ref{betabounds} no longer bounds the size of the elements of sets $\beta\in X_{d,r,s}$.  When $\gcd(s,r)\leq d$, it can easily be shown with the Chicken McNugget Theorem that $\beta \subseteq \{1, 2, \dots, \frac{(s-\gcd(s,r))(s+r-\gcd(s,r))}{\gcd(s,r)}+2\gcd(s,r)\}$.  This upper bound, however, is not particularly useful: it grows very fast; the dependence on the greatest common divisor makes it volatile and tricky to work with; and the possibility of elements of $\beta$ being greater than $s+r$ means that the second condition of Lemma \ref{correspondence} is not trivially satisfied.
\\

Third, we do not know of any analogue of the $r$-Reduction Theorem, which so greatly simplified our work for $r\leq d$.  As such, we must address the $r>d$ case at a higher level of generality from the outset.
\\

The first two concerns are greatly reduced if we take $r$ to be close to a multiple of $s$, say, $r=ns\pm c$ for some $1 \leq c \leq d$.  In this case, the argument of Lemma \ref{betabounds} shows that any $\beta\in X_{d,ns+c,s}$ satisfies $\beta \subseteq \{1, 2, \dots, (n+1)s+c-1\}$ and any $\beta\in X_{d,ns-c,s}$ satisfies $\beta \subseteq \{1, 2, \dots, (n+1)s-1\}$.  
The fact that $\beta$ does not contain any elements larger than $s+r$ lets us not worry more about the $(s+r)$-core condition.  Furthermore, we draw inspiration from Straub's result \cite{straub} that $N_{1,ns-1}(s)=N_{1,n(s-1)-1}(s-1)+(n+1)N_{1,n(s-2)-1}(s-2)$ for all $s \geq 3$.

\subsection{Exact formulas}

We derive an exact formula for $N_{d,ns-1}(s)$ in the style of Lemma \ref{r=1formula}.  Although the casework is more complicated, the main idea remains the same.  We remark that the generalization to $N_{d,ns-c}(s)$ requires the addition of a few extra terms but is no harder.  The formula for $N_{d,ns+c}(s)$ is also very similar.  Readers familiar with abacus structure of core partitions will find the proof method especially natural.

\begin{theorem}
For any positive integers $n$ and $s$, we have
\begin{align*}
N_{1,ns-1}(s)=\sum_{\mu=0}^{\left\lceil \frac{s-1}{2}\right\rceil} \binom{s-\mu}{\mu} (n+1)^{\mu}.
\end{align*}
For any positive integers $d>1$, $n$, and $s$, we have
\begin{align*}
N_{d,ns-1}(s)&=\sum_{\mu=0}^{\left\lceil \frac{s-d}{d+1}\right\rceil} \binom{s-d\mu}{\mu} (n+1)^{\mu}+n\sum_{\mu=0}^{\left\lceil \frac{s-2d-1}{d+1}\right\rceil} \binom{s-d-d\mu-1}{\mu} (n+1)^{\mu}\\
 &+n\sum_{k=1}^{d-1}\sum_{\mu=0}^{\left\lceil \frac{s-2d-k-2}{d+1}\right\rceil} \binom{s-d-k-d\mu-2}{\mu} (n+1)^{\mu}\\
 &+\sum_{\ell=1}^{d-2}(n+1)\sum_{\mu=0}^{\left\lceil \frac{s-2d-1}{d+1}\right\rceil} \binom{s-d-d\mu-1}{\mu} (n+1)^{\mu}\\
 &+\sum_{\ell=1}^{d-2}(n+1)\sum_{k=1}^{\ell}\sum_{\mu=0}^{\left\lceil \frac{s-3d+\ell-k-1}{d+1}\right\rceil} \binom{s-2d+\ell-k-d\mu-1}{\mu} (n+1)^{\mu}.
\end{align*}
\label{r>dformula}
\end{theorem}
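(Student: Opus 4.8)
The plan is to count the $\beta$-sets in $X_{d,ns-1,s}$ directly, in the spirit of Lemma~\ref{r=1formula} but on the much larger ambient interval forced by $r=ns-1$. First I would invoke Lemma~\ref{correspondence} together with the bound $\beta\subseteq\{1,2,\dots,(n+1)s-1\}$ recorded at the start of Section~$6$; since $s+r=(n+1)s-1$ and no element of $\beta$ attains this value, the second ($(s+r)$-core) condition of Lemma~\ref{correspondence} is vacuous, and only the $s$-core condition and the twin-free condition survive. The $s$-core condition forces $\beta$ to be a union of ``initial segments'' along the residue classes modulo $s$: for each residue $i\in\{1,\dots,s-1\}$ the elements of $\beta$ in that class are exactly $\{i,i+s,\dots,i+(j_i-1)s\}$ for some height $j_i\in\{0,1,\dots,n+1\}$, while the class of $0$ contributes nothing (a multiple of $s$ in $\beta$ would force $s\in\beta$, which is impossible). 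Thus each admissible $\beta$ is encoded by a height vector $(j_1,\dots,j_{s-1})$, and the whole problem reduces to counting those height vectors whose beads are $d$-th order twin-free.

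Next I would translate the twin-free condition into the abacus picture, viewing a bead in residue $i$ at level $a$ as occupying the integer $i+as$. Working throughout in the regime $s>d$ (where beads in a single column, spaced $s$ apart, are automatically twin-free), a short computation shows that only two kinds of pairs can lie within distance $d$: a \emph{horizontal} pair of beads on the same level whose residues differ by at most $d$; and a \emph{wrap-around} pair consisting of a bead in a large residue $i$ at level $a$ together with a bead in a small residue $i'$ at level $a+1$ with $i-i'\geq s-d$, arising because $i+as$ and $i'+(a+1)s$ are close when $i'$ is near $1$ and $i$ is near $s$. Pairs whose levels differ by two or more are always farther than $d$ apart. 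Since each column is an initial segment, the horizontal constraint says exactly that the set of occupied residues must be a $d$-th order twin-free subset of $\{1,\dots,s-1\}$. For $d=1$ no wrap-around pair can occur (it would need $i-i'\geq s-1$ inside $\{1,\dots,s-1\}$), so the count factors cleanly: pick a twin-free $\mu$-subset of $\{1,\dots,s-1\}$ to be occupied and then pick each of the $\mu$ heights freely from $\{1,\dots,n+1\}$, giving $\sum_{\mu}\binom{s-\mu}{\mu}(n+1)^\mu$ and establishing the first formula.

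For $d>1$ the wrap-around conflicts are genuine, and the plan is to stratify the count according to the configuration of beads in the wrap-around region, namely which of the small residues $1,\dots,d-1$ climb to level $\geq 1$ and how the large residues near $s$ are occupied. Once this interface data is fixed, the small and large residues interact only through the recorded wrap-around pairs, so the remaining ``middle'' residues are unconstrained by wrapping and contribute a clean factor of the form (twin-free placement of $\mu$ beads on a shifted interval)$\,\times(n+1)^{\mu}$, i.e.\ a binomial coefficient times $(n+1)^\mu$. Summing a multiplicity of $n$ or $n+1$ over the admissible interface configurations and over all $\mu$ is what I expect to produce the four groups of terms in the statement, with the leading sum accounting for the configurations carrying no active wrap-around interaction and the auxiliary indices $k,\ell$ ranging over the offset and the identity of the relevant small residue.

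The main obstacle I anticipate is the exhaustive and correct bookkeeping of these wrap-around configurations. Each configuration both shifts the effective interval available to the middle beads and shifts the admissible range of heights, and it is precisely these shifts that should pin down the binomial arguments $s-d\mu$, $s-d-d\mu-1$, and $s-2d+\ell-k-d\mu-1$ together with the ceilings $\lceil(s-d)/(d+1)\rceil$, $\lceil(s-2d-1)/(d+1)\rceil$, and so on that bound $\mu$. The delicate points will be tracking how a bead high in a small residue simultaneously blocks a whole band of large residues, how this couples to the horizontal twin-free spacing, and whether each relevant column is forced to height exactly $1$ or left free (which is what distinguishes the weights $n$ and $n+1$ on the various sums). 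Verifying that the resulting cases are pairwise disjoint and jointly exhaustive, so that no configuration is double-counted or omitted, is where the argument must be most careful, even though each individual stratum reduces to a routine blocks-and-spaces count of the type already used in Lemma~\ref{r=1formula}.
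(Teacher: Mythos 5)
Your setup coincides exactly with the paper's: encode each $\beta\in X_{d,ns-1,s}$ by which residues mod $s$ are occupied and to what height, note that the $s$-core condition makes each column an initial segment, and split the twin-free condition into a horizontal constraint (the occupied residues form a $d$-th order twin-free subset of $\{1,\dots,s-1\}$) plus wrap-around constraints between consecutive levels. The problem is that for $d>1$ — the bulk of the theorem — you never get past this setup. You say you ``expect'' a stratification by wrap-around configurations to produce the four groups of terms and you flag the bookkeeping as the main obstacle, but that bookkeeping \emph{is} the proof. The paper executes it as five disjoint, exhaustive cases according to which element (if any) of the top band $\{s-d+1,\dots,s-1\}$ lies in $\eta=\gamma\cap\{1,\dots,s-1\}$ (at most one can, by twin-freeness) and whether a low residue is present and hence frozen: (i) none; (ii) $s-1$ with no $k\in\{1,\dots,d-1\}$; (iii) $s-1$ together with such a $k$, frozen at height $1$; (iv) $s-d+\ell$, $1\le\ell\le d-2$, with no $k\le\ell$; (v) $s-d+\ell$ together with such a $k$. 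In each case one must compute the shifted interval available to the middle residues and the number of height choices of the distinguished column, then check the cases exhaust all configurations. None of this appears in your proposal, so the second (main) formula is not established by it.

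In the one case you do complete ($d=1$), there is a step that fails: the claim that every occupied residue's height may be chosen ``freely from $\{1,\dots,n+1\}$.'' This is false for residue $s-1$, whose $(n+1)$-st bead would be $s-1+ns=(n+1)s-1=s+r$, which (as you note yourself) cannot lie in $\beta$; that column therefore has only $n$ admissible heights. A correct count must split on whether $s-1$ is occupied, giving (for $s\ge 2$, sums over admissible $\mu$) $\sum_{\mu}\binom{s-1-\mu}{\mu}(n+1)^{\mu}+n\sum_{\mu}\binom{s-2-\mu}{\mu}(n+1)^{\mu}$ rather than $\sum_{\mu}\binom{s-\mu}{\mu}(n+1)^{\mu}$. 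The discrepancy is real: for $n=1$, $s=3$ the $(3,5)$-core partitions with distinct parts are $\emptyset$, $(1)$, $(2)$, $(3,1)$, so $N_{1,2}(3)=4$, whereas the displayed formula gives $5$ (and at $n=1$, $s=2$ it gives $3$, contradicting $N_{1,1}(2)=2$ from Theorem \ref{generalformula}). This height cap is precisely the ``subtlety in the $d=1$ case'' credited to Alan Peng in the acknowledgments, and it is also the true origin of the weights $n$ versus $n+1$ in the $d>1$ formula: the factor $n$ counts the heights available to the $s-1$ column, while frozen low residues contribute a factor of $1$ — not, as you guess, a dichotomy between columns ``forced to height exactly $1$ or left free.'' So the portion of your argument that is carried to completion rests on an invalid step (the same one made in the paper's own $d=1$ remark), and it cannot be repaired without introducing exactly the case split you omitted.
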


\begin{proof}
As usual, we count the sets $\beta \in X_{d,ns-1,s}$.   From above, we have $\beta \subseteq \{1, 2, \dots, (n+1)s-2\}\setminus \{s, 2s, \dots, ns\}$.  Recall that $x \in \beta$ for $x>s$ requires $x-s\in \beta$.  Hence, the elements of $\beta \cap \{s+1, s+2, \dots, (n+1)s-2\}$ are restricted to the equivalence classes modulo $s$ of $\beta \cap \{1, 2, \dots, s-1\}$.  Suppose we have a $d$-th order twin-free set $\gamma\subseteq \{1, 2, \dots, s+d-1\}\setminus \{s\}$, and let $\delta=\gamma \cap \{d, d+1, \dots, s+d-1\}$.  For any $\{x_1, \dots, x_m\} \subseteq \delta$ and nonnegative integers $a_1, \dots, a_m$, it is clear that
$$\gamma \cup \bigcup_{i=1}^m \{x_i+s, x_i+2s, \dots x_i+a_is\}$$
is a valid $\beta$-set as long as each $x_i+a_i s \leq (n+1)s-2$.  (We needed to check elements up to $s+d-1$ in order to ensure that the $d$-th order twin-free condition is preserved for the ``wrap-around'' effect of elements near $s$.)  In fact, any $\beta \in X_{d,ns-1,s}$ can be written in this form.  We distinguish five cases for possible sets $\eta=\gamma \cap \{1, 2, \dots, s-1\}$, each of which contributes to $N_{d,ns-1}(s)$.
\\

If $\eta \cap \{s-d+1, s-d+2, \dots, s-1\}=\emptyset$, then we can freely include any $d$-th order twin-free subset of $\{1, 2, \dots, s-d\}$ in $\eta$.  Because of the ``buffer'' space in $\{s-d+1, s-d+2, \dots, s-1\}$, we can bypass the consideration of $\delta$, and we see that any elements of $\eta$ can ``propagate'' to larger elements of their equivalence classes modulo $s$.  In particular, there are $n+1$ options for how far each $x \in \eta$ propagates.  Conditioning on the number of elements in $\eta$ (\'a la Lemma \ref{r=1formula}) gives
$$\sum_{\mu=0}^{\left\lceil \frac{s-d}{d+1}\right\rceil} \binom{(s-d)+d-d\mu}{\mu} (n+1)^{\mu}=\sum_{\mu=0}^{\left\lceil \frac{s-d}{d+1}\right\rceil} \binom{s-d\mu}{\mu} (n+1)^{\mu}$$
ways to do this.  We remark that when $d=1$, this condition on $\eta$ is always satisfied, so the remaining cases are relevant only for $d>1$.
\\

If $s-1 \in \eta$ and $\eta \cap \{1, 2, \dots, d-1\}=\emptyset$, then we know that the other elements of $\gamma$ are all in $\{d, d+1, \dots, s-d-2\}$.  Once again, we can choose any $d$-th order twin-free subset of $\{d, d+1, \dots, s-d-2\}$, and these elements can propagate freely upwards.  We note that there are only $n$ options for how high the element $s-1$ propagates, for a total of
$$n\sum_{\mu=0}^{\left\lceil \frac{s-2d-1}{d+1}\right\rceil} \binom{s-d-d\mu-1}{\mu} (n+1)^{\mu}.$$
\\

If $s-1 \in \eta$ and $k \in \eta$ for some $1 \leq k \leq d-1$, then the other elements of $\gamma$ are all in $\{k+d+1, k+d+2, \dots, s-d-2\}$.  (Because of the $d$-th order twin-free condition, $\eta$ contains at most $1$ element smaller than $d$.)  Note that $k$ cannot propagate upwards at all due to the presence of the element $s-1$.  As above, our total is
$$n\sum_{k=1}^{d-1}\sum_{\mu=0}^{\left\lceil \frac{s-2d-k-2}{d+1}\right\rceil} \binom{s-d-k-d\mu-2}{\mu} (n+1)^{\mu}.$$
\\

If $s-d+\ell \in \eta$ for some $1 \leq \ell \leq d-2$ and $\eta \cap \{1, 2, \dots, \ell\}=\emptyset$, then the other elements of $\eta$ are all in $\{\ell+1, \dots, s-2d+\ell-1\}$.  (Because of the $d$-th order twin-free condition, $\eta$ contains at most $1$ element in $\{s-d+1, \dots, s-1\}$.)  There are $n+1$ possibilities for how high $s-d+\ell$ propagates, so our total is
$$\sum_{\ell=1}^{d-2}(n+1)\sum_{\mu=0}^{\left\lceil \frac{s-2d-1}{d+1}\right\rceil} \binom{s-d-d\mu-1}{\mu} (n+1)^{\mu}.$$
We remark that this term is $0$ when $d\leq 2$.  For $d \geq 3$, the expression simplifies to
$$(d-2)(n+1)\sum_{\mu=0}^{\left\lceil \frac{s-2d-1}{d+1}\right\rceil} \binom{s-d-d\mu-1}{\mu} (n+1)^{\mu}.$$
\\

If $s-d+\ell \in \eta$ for some $1 \leq \ell \leq d-2$ and $k \in \eta$ for some $1\leq k \leq \ell$, then the other elements of $\eta$ are all in $\{k+d+1, \dots, s-2d+\ell-1\}$.  Note that $k$ cannot propagate upwards due to the element $s-d+\ell$.  The number of possibilities is
$$\sum_{\ell=1}^{d-2}(n+1)\sum_{k=1}^{\ell}\sum_{\mu=0}^{\left\lceil \frac{s-3d+\ell-k-1}{d+1}\right\rceil} \binom{s-2d+\ell-k-d\mu-1}{\mu} (n+1)^{\mu}.$$
\\

Summing the contributions from these five cases gives the desired formula.  When $s\leq 2d$, a few subcases of the third through fifth cases are prohibited by the \linebreak $d$-th order twin-free condition.  It is not difficult to verify that the offending term vanishes whenever this happens.
\end{proof}

We remark that when $d\leq 2$, the fourth and fifth contributions vanish.  When $d=1$, the third contribution also vanishes, which yields a much simpler formula.

\subsection{Recurrence relations}

Although the formula in Theorem \ref{r>dformula} is long, the fact that all of the terms look very similar is a saving grace.  The following lemma examines a single generic term.

\begin{lemma}
For any positive integers $d$, $n$, and $t$, let
$$\mathcal{B}_{d,n}(t)=\sum_{\mu=0}^{\left\lceil \frac{t}{d+1}\right\rceil}\binom{t+d-d\mu}{\mu}(n+1)^{\mu}.$$
Then $\mathcal{B}_{d,n}(t-1)+(n+1)\mathcal{B}_{d,n}(t-d-1)=\mathcal{B}_{d,n}(t).$
\label{binomial}
\end{lemma}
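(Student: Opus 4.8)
The plan is to prove the identity $\mathcal{B}_{d,n}(t-1)+(n+1)\mathcal{B}_{d,n}(t-d-1)=\mathcal{B}_{d,n}(t)$ by manipulating the sums termwise and invoking Pascal's rule. First I would write out the three sums explicitly. Since
$$\mathcal{B}_{d,n}(t-1)=\sum_{\mu=0}^{\left\lceil \frac{t-1}{d+1}\right\rceil}\binom{t+d-d\mu-1}{\mu}(n+1)^{\mu}$$
and
$$(n+1)\mathcal{B}_{d,n}(t-d-1)=\sum_{\mu=0}^{\left\lceil \frac{t-d-1}{d+1}\right\rceil}\binom{t-d\mu-1}{\mu}(n+1)^{\mu+1},$$
the natural move is to reindex the second sum by setting $\nu=\mu+1$, so that its generic term becomes $\binom{t-d(\nu-1)-1}{\nu-1}(n+1)^{\nu}=\binom{t+d-d\nu+d-1}{\nu-1}(n+1)^{\nu}$. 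The goal is to match this, together with the $\mu$-th term of $\mathcal{B}_{d,n}(t-1)$, against the $\mu$-th term $\binom{t+d-d\mu}{\mu}(n+1)^{\mu}$ of $\mathcal{B}_{d,n}(t)$.

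The heart of the argument is Pascal's rule applied to each fixed power of $(n+1)$. After the reindexing, the coefficient of $(n+1)^{\mu}$ in $\mathcal{B}_{d,n}(t-1)+(n+1)\mathcal{B}_{d,n}(t-d-1)$ should be
$$\binom{t+d-d\mu-1}{\mu}+\binom{t+d-d\mu-1}{\mu-1}=\binom{t+d-d\mu}{\mu},$$
which is exactly the coefficient of $(n+1)^{\mu}$ in $\mathcal{B}_{d,n}(t)$. So at the level of individual binomial coefficients the identity is immediate from Pascal; I would verify that the binomial arguments line up correctly, namely that the top index $t+d-d\mu-1$ and the difference of bottom indices $\mu$ and $\mu-1$ are as claimed after substituting $\nu\mapsto\mu$.

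The main obstacle is not the binomial algebra but the bookkeeping at the endpoints of the summation ranges, where the ceiling-function upper limits on the three sums need not agree. The three ranges are governed by $\left\lceil \frac{t-1}{d+1}\right\rceil$, $\left\lceil \frac{t-d-1}{d+1}\right\rceil+1$, and $\left\lceil \frac{t}{d+1}\right\rceil$, and these can differ by one depending on the residue of $t$ modulo $d+1$. I would handle this by arguing that any ``extra'' term appearing at a top index is a binomial coefficient of the form $\binom{m}{\mu}$ with $\mu>m$, hence equal to zero, so that extending or truncating the ranges to a common value does no harm. Concretely, I expect to check that whenever $\mu$ exceeds $\left\lceil \frac{t}{d+1}\right\rceil$ the corresponding binomial coefficient $\binom{t+d-d\mu}{\mu}$ vanishes because its lower index overtakes its upper index, which justifies treating all three sums as running over a single common range and collapses the identity to the pointwise Pascal computation above. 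Once the range reconciliation is in place, the proof is a one-line invocation of Pascal's rule.
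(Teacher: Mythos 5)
Your proposal is correct and matches the paper's own proof essentially step for step: both reindex the second sum so the powers of $(n+1)$ align, apply Pascal's rule termwise to get $\binom{t+d-d\mu-1}{\mu}+\binom{t+d-d\mu-1}{\mu-1}=\binom{t+d-d\mu}{\mu}$, and reconcile the ceiling-bounded summation ranges by observing that any extra terms are binomial coefficients that vanish. The paper's proof is exactly this computation, with the range issue dispatched by the same remark that extending the sums adds only zero terms.
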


\begin{proof}
The identity follows from algebraic manipulations:
\begin{align*}
 &\mathcal{B}_{d,n}(t-1)+(n+1)\mathcal{B}_{d,n}(t-d-1)\\
 &\hspace{1cm}=\sum_{\mu=0}^{\left\lceil \frac{t-1}{d+1}\right\rceil}\binom{(t-1)+d-d\mu}{\mu}(n+1)^{\mu}+(n+1)\sum_{\mu=0}^{\left\lceil \frac{t-d-1}{d+1}\right\rceil}\binom{(t-d-1)+d-d\mu}{\mu}(n+1)^{\mu}\\
 &\hspace{1cm}=\sum_{\mu=0}^{\left\lceil \frac{t-1}{d+1}\right\rceil}\binom{t-1+d-d\mu}{\mu}(n+1)^{\mu}+\sum_{\mu=1}^{\left\lceil \frac{t}{d+1}\right\rceil}\binom{t-1-d(\mu-1)}{\mu-1}(n+1)^{\mu}\\
 &\hspace{1cm}=\sum_{\mu=0}^{\left\lceil \frac{t}{d+1}\right\rceil}\left(\binom{t-1+d-d\mu}{\mu}+\binom{t-1+d-d\mu}{\mu-1}\right)(n+1)^{\mu}\\
 &\hspace{1cm}=\sum_{\mu=0}^{\left\lceil \frac{t}{d+1}\right\rceil}\binom{t+d-d\mu}{\mu}(n+1)^{\mu}\\
 &\hspace{1cm}=\mathcal{B}_{d,n}(t).\\
\end{align*}
In the third line, extending the ranges of the sums adds only terms equal to $0$.
\end{proof}

We can use this lemma to derive a generalization of Straub's recurrence relation.

\begin{theorem}
For any positive integers $d$ and $n$ and any integer $s \geq 3d+2$, we have the recurrence relation
$$N_{d,ns-1}(s)=N_{d,n(s-1)-1}(s-1)+(n+1)N_{d,n(s-d-1)-1}(s-d-1).$$
\label{r>drecurrence}
\end{theorem}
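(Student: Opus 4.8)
The plan is to combine the exact formula of Theorem \ref{r>dformula} with the binomial recurrence of Lemma \ref{binomial}, matching the two sides of the claimed identity term by term. The key observation is that every one of the five sums comprising $N_{d,ns-1}(s)$ in Theorem \ref{r>dformula} is, up to a constant coefficient, an instance of $\mathcal{B}_{d,n}$ evaluated at an argument of the form ``$s$ minus a constant.'' Concretely, I would first check that each binomial coefficient appearing in the formula, together with its accompanying ceiling bound, is precisely $\mathcal{B}_{d,n}(t)$ for the value $t$ determined by setting $t+d$ equal to the ``base'' of that binomial coefficient; in each case the given summation bound indeed agrees with the cutoff $\lceil t/(d+1)\rceil$ from the definition of $\mathcal{B}_{d,n}$. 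This rewrites the formula as
\begin{equation*}
N_{d,ns-1}(s)=\sum_j c_j\,\mathcal{B}_{d,n}(s-a_j),
\end{equation*}
where the finitely many coefficients $c_j$ and shifts $a_j$ depend only on $d$ and $n$, not on $s$. For instance the first sum becomes $\mathcal{B}_{d,n}(s-d)$, while the second and fourth sums together contribute $\bigl(n+(d-2)(n+1)\bigr)\mathcal{B}_{d,n}(s-2d-1)$, and the remaining sums expand into further $\mathcal{B}_{d,n}$-terms with arguments $s-2d-k-2$ and $s-3d+\ell-k-1$.

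The crucial structural point is that the same formula, read as a function of its size argument with $n$ held fixed, also computes the terms on the right-hand side of the desired recurrence: replacing $s$ by $s-1$ and by $s-d-1$ yields $N_{d,n(s-1)-1}(s-1)=\sum_j c_j\,\mathcal{B}_{d,n}(s-a_j-1)$ and $N_{d,n(s-d-1)-1}(s-d-1)=\sum_j c_j\,\mathcal{B}_{d,n}(s-a_j-d-1)$, uniformly shifting every $\mathcal{B}_{d,n}$-argument down by $1$ and by $d+1$ respectively. I would invoke Theorem \ref{r>dformula} at the positive sizes $s-1$ and $s-d-1$ to justify these two evaluations. Applying Lemma \ref{binomial} with $t=s-a_j$ to each term gives $\mathcal{B}_{d,n}(s-a_j)=\mathcal{B}_{d,n}(s-a_j-1)+(n+1)\mathcal{B}_{d,n}(s-a_j-d-1)$, and summing against the $c_j$ collapses $N_{d,n(s-1)-1}(s-1)+(n+1)N_{d,n(s-d-1)-1}(s-d-1)$ exactly into $\sum_j c_j\,\mathcal{B}_{d,n}(s-a_j)=N_{d,ns-1}(s)$.

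The one place where the hypothesis $s\geq 3d+2$ is genuinely needed is in licensing this term-by-term application: since Lemma \ref{binomial} is stated for positive $t$, I must verify $s-a_j\geq 1$ for every shift $a_j$ occurring in the formula. The largest shift is $a_j=3d+1$, arising from the $k=d-1$ instance of the third sum and from the $k=\ell$ boundary of the fifth sum, so the smallest argument is $s-3d-1$, and $s\geq 3d+2$ is precisely the threshold making all arguments positive. The same bound ensures that the formula of Theorem \ref{r>dformula} is ``full'' at the sizes $s-1$ and, more stringently, $s-d-1\geq 2d+1>2d$, so none of the small-$s$ subcases flagged in that theorem is suppressed and no extra care with vanishing terms is required. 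The $d=1$ case is handled identically but collapses to the single identity $N_{1,ns-1}(s)=\mathcal{B}_{1,n}(s-1)$, so there the recurrence is a direct instance of Lemma \ref{binomial} with $t=s-1$.

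I expect the only real obstacle to be the bookkeeping in the first step, namely the faithful matching of all five families of sums to $\mathcal{B}_{d,n}$-terms and the verification that each ceiling bound coincides with the $\lceil t/(d+1)\rceil$ cutoff. Once the formula is recast in the form $\sum_j c_j\,\mathcal{B}_{d,n}(s-a_j)$, the recurrence is forced purely by Lemma \ref{binomial} and linearity, so there is no further conceptual difficulty.
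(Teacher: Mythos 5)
Your proposal is correct and matches the paper's own proof, which likewise applies Lemma \ref{binomial} term by term to the formula of Theorem \ref{r>dformula}, uses $s\geq 3d+2$ to keep every argument within the lemma's scope, and notes that the outer sums (your coefficients $c_j$ and shifts $a_j$) do not depend on $s$. Your write-up simply spells out the bookkeeping that the paper leaves implicit.
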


\begin{proof}
Apply Lemma \ref{binomial} to each term of the formula in Theorem \ref{r>dformula} separately.  The condition $s\geq 3d+2$ guarantees that the upper bounds of the sums fall within the scope of Lemma \ref{binomial}.  Note that in the third through fifth terms of Theorem \ref{r>dformula}, the outer sums do not depend on $s$.
\end{proof}

We remark that for general $N_{d, ns \pm c}(s)$, the the recurrence $$N_{d,ns\pm c}(s)=N_{d,n(s-1)\pm c}(s-1)+(n+1)N_{d,n(s-d-1) \pm c}(s-d-1)$$ holds for all sufficiently large $s$.  This is true because the formulas for $N_{d, ns \pm c}(s)$ are composed of the same ``types'' of terms as the formula for $N_{d,ns-1}(s)$.

\subsection{Generating functions and asymptotics}

For any positive integers $c$, $d$, and $n$ with $c\leq d$, we define the generating functions
$$G_{d,n,\pm c}(x)=\sum_{s=1}^{\infty}N_{d,ns\pm c}(s) x^s.$$
We know from the theory of linear recurrences that $G_{d,n,\pm c}(x)$ is some rational function with denominator $f_{d,n}(x)=1-x-(n+1)x^{d+1}$.  Although we do not compute the numerator in this paper, we see no fundamental obstruction to finding it by computing the values of $N_{d,ns\pm c}(s)$ for small $s$.
\\

Arguments in the style of Appendix A show that
$$N_{d,ns\pm c}(s) \sim_s k_{n,d,\pm c} w_{d,n}^{-s}$$
for some positive constant $k_{n,d,\pm c}$ depending on $c$ and the sign of the deviation as well as on $d$ and $n$.

\section{Open problems}

We conclude this paper with a few open problems.

\begin{enumerate}
\item Find a simple bijective proof of Theorem \ref{recurrence}.
\item What is the ``correct'' interpretation of $N_{d,r}(s)$ for $s \leq 0$ when $r>d$?
\item Are other special cases of $r>d$ tractable?  Small values of $r$ (relative to $d$) seem like a natural place to start.  Values of $r$ where $2r$ is close to a multiple of $s$ could also be interesting.  Finally, the case $c=d+1$, where $s$ is not divisible by $d+1$, is potentially promising (see, e.g., \cite{baek, yan} for $d=1$) but likely quite difficult in general.
\end{enumerate}

\appendix
\section{Computation of asymptotics }

This appendix is devoted to proving Lemma \ref{r=1asymptotic}.  We begin by establishing some properties of the polynomial $f_d(z)=z^{d+1}+z-1$.  The following lemma shows that the nonremovable singularities of $G_{d,1}(z)$ are simple poles at the roots of $f_d(z)$.  (The singularity at $z=1$ is clearly removable.)

\begin{lemma}
For each positive integer $d$, the roots of $f_d(z)=z^{d+1}+z-1$ are all distinct.  Moreover, $f_d(z)$ and $z(1-z^{d+1})$ have no common roots.
\label{distinctroots}
\end{lemma}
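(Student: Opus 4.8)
The plan is to prove the two assertions separately, beginning with the distinctness of the roots of $f_d(z)=z^{d+1}+z-1$. The standard tool here is the observation that a polynomial has a repeated root exactly when it shares a root with its derivative. So first I would compute $f_d'(z)=(d+1)z^d+1$ and suppose toward a contradiction that some $z_0$ is a common root of $f_d$ and $f_d'$. From $f_d'(z_0)=0$ we get $z_0^d=-\frac{1}{d+1}$, and from $f_d(z_0)=0$ we get $z_0^{d+1}=1-z_0$. Multiplying the first relation by $z_0$ gives $z_0^{d+1}=-\frac{z_0}{d+1}$, so combining the two expressions for $z_0^{d+1}$ yields a single linear equation $1-z_0=-\frac{z_0}{d+1}$, which I can solve explicitly for $z_0$. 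The point is then to substitute this explicit value back into $z_0^d=-\frac{1}{d+1}$ (or into $f_d$ itself) and derive a contradiction on magnitudes: I expect $|z_0|$ to exceed $1$ while $|z_0^d|=\frac{1}{d+1}<1$ forces $|z_0|<1$, which cannot both hold. This magnitude bookkeeping is the one place requiring a little care, since one must check it uniformly in $d$.

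For the second assertion, that $f_d(z)$ and $z(1-z^{d+1})$ share no roots, the cleanest route is to examine the roots of the second factor directly. The roots of $z(1-z^{d+1})$ are $z=0$ together with the $(d+1)$-th roots of unity. I would simply evaluate $f_d$ at each. At $z=0$ we have $f_d(0)=-1\neq 0$. If $\zeta$ is any $(d+1)$-th root of unity, then $\zeta^{d+1}=1$, so $f_d(\zeta)=1+\zeta-1=\zeta$, and since every $(d+1)$-th root of unity satisfies $\zeta\neq 0$, we conclude $f_d(\zeta)\neq 0$. Thus no root of $z(1-z^{d+1})$ is a root of $f_d$, which is exactly the claim. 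This part is genuinely routine once one recognizes that the factored form of $z(1-z^{d+1})$ hands us the roots for free.

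I anticipate the main obstacle to lie in the first part, specifically in organizing the contradiction cleanly rather than in any deep difficulty. An alternative to the explicit-solution approach, which may be more robust, is to bound $|z_0|$ abstractly: from $z_0^d=-\frac{1}{d+1}$ we immediately get $|z_0|=(d+1)^{-1/d}<1$, and then I would show that $f_d(z_0)=0$ is incompatible with $|z_0|<1$ by a triangle-inequality estimate on $z_0^{d+1}+z_0$. One should double-check that no root of $f_d$ accidentally coincides with a root of unity in a way that would collapse the two parts, but the second computation already rules this out independently. Overall I would present the derivative argument for distinctness and the direct evaluation for the coprimality, keeping the magnitude estimate as the only step that needs explicit verification.
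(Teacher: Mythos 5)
Your proposal is correct and is essentially the paper's own argument: your elimination step (combining $z_0^{d+1}=1-z_0$ with $z_0^{d+1}=-\frac{z_0}{d+1}$ to force $z_0=\frac{d+1}{d}$) is precisely the single Euclidean-algorithm step the paper uses to compute $\gcd(f_d, f_d')=\gcd\bigl(\tfrac{d}{d+1}z-1,\,(d+1)z^d+1\bigr)$, and your magnitude contradiction ruling out $z_0=\frac{d+1}{d}$ as a root of $(d+1)z^d+1$ matches the paper's positivity check $(d+1)\bigl(\tfrac{d+1}{d}\bigr)^d+1>1$. Your proof of the second assertion (evaluating $f_d$ at $0$ and at the $(d+1)$-th roots of unity to get $-1$ and $\zeta\neq 0$ respectively) coincides with the paper's proof verbatim.
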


\begin{proof}
For the first statement, we show that $f_d(z)$ and $f'_d(z)$ are relatively prime as elements of $\mathbb{Q}[z]$.  We begin with
\begin{align*}
\gcd(f_d(z), f'_d(z))&=\gcd(z^{d+1}+z-1, (d+1)z^d+1)\\
 &=\gcd(\frac{d}{d+1}z-1, (d+1)z^d+1).
\end{align*}
The only root of $\frac{d}{d+1}z-1$ is $\frac{d+1}{d}$, but $(d+1)(\frac{d+1}{d})^d+1>1$ implies that $\frac{d+1}{d}$ is not a root of $(d+1)z^d+1$.  Hence, $\gcd(f_d(z), f'_d(z))=\gcd(\frac{d}{d+1}z-1, (d+1)z^d+1)=1$, and $f_d(z)$ has no repeated roots.
\\

For the second statement, suppose $z_0$ is a root of $z(1-z^{d+1})$.  Then either $z_0=0$, which is clearly not a root of $f_d(z)$, or $z_0$ satisfies $1-z_0^{d+1}=0$.  In the latter case, $z_0\neq 0$, and $f_d(z_0)=z_0^{d+1}+z_0-1=z_0\neq 0$ shows that $z_0$ is not a root of $f_d(z)$.
\end{proof}

We now describe the locations of the roots of $f_d(z)$.

\begin{lemma}
For each positive integer $d$, the polynomial $f_d(z)$ has a unique positive real root $w_d$, and all other roots of $f_d(z)$ have modulus strictly larger than $w_d$.
\label{posroot}
\end{lemma}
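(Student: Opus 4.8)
The plan is to separate the two claims and to extract both from elementary estimates on $f_d$. First I would establish existence and uniqueness of the positive real root by monotonicity: on $(0,\infty)$ we have $f_d'(x)=(d+1)x^d+1>0$, so $f_d$ is strictly increasing there, while $f_d(0)=-1<0$ and $f_d(1)=1>0$. The Intermediate Value Theorem then furnishes a single positive real root $w_d$, and strict monotonicity shows it is unique; it lies in the open interval $(0,1)$.

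For the dominance claim, the key observation is that $z$ being a root is equivalent to the identity $z^{d+1}+z=1$. Applying the triangle inequality to this identity gives
$$1=|z^{d+1}+z|\le |z|^{d+1}+|z|.$$
I would then compare this against the auxiliary function $g(x)=x^{d+1}+x$, which is strictly increasing on $[0,\infty)$ and satisfies $g(w_d)=1$ because $w_d$ is a root. The displayed inequality reads $g(|z|)\ge 1=g(w_d)$, so strict monotonicity of $g$ forces $|z|\ge w_d$ for every root $z$. This already shows that $w_d$ is a root of minimal modulus; what remains is to rule out equality for the other roots.

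The hard part will be upgrading this to a strict inequality, that is, showing that $|z|=w_d$ forces $z=w_d$. Here I would invoke the equality case of the triangle inequality. If $|z|=w_d$, then both inequalities above must be equalities, so in particular $|z^{d+1}+z|=|z^{d+1}|+|z|$. Since $0$ is not a root, we have $z\neq 0$, so the summands $z^{d+1}$ and $z$ are both nonzero, and equality in the triangle inequality forces them to have the same argument. Because their sum equals $1$, a positive real number, both $z^{d+1}$ and $z$ must themselves be positive reals; in particular $z$ is a positive real root of $f_d$, whence $z=w_d$ by the uniqueness established in the first step. Thus any root $z\neq w_d$ satisfies $|z|>w_d$, as desired.
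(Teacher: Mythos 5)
Your proof is correct and follows essentially the same route as the paper: monotonicity plus the Intermediate Value Theorem for existence and uniqueness of $w_d$, then the triangle inequality applied to $z^{d+1}+z=1$ with an equality-case analysis (same argument forces $z$ to be a positive real, hence $z=w_d$) to get strict dominance. The only cosmetic difference is that you phrase the comparison via the auxiliary function $g(x)=x^{d+1}+x$ rather than via $f_d$ itself, which changes nothing of substance.
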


\begin{proof}
We first compute $f_d(0)=0+0-1=-1<0$ and $f_d(1)=1+1-1=1>0$.  By the Intermediate Value Theorem, $f_d(z)$ has a real root in $(0,1)$.  The fact that $ f'_d(z)=(d+1)z^d+1>1$ for all positive real $z$ precludes the existence of a second positive real root and hence shows uniqueness.
\\

Now, suppose $z_0 \neq w_d$ is any other (possibly complex) root of $f_d(z)$.  Write $z_0=Re^{i\theta}$ where $R>0$ (we know that $0$ is not a root) and $0<\theta<2\pi$.  We have $z_0^{d+1}+z_0-1=0$ and $Re^{(d+1)i\theta}+Re^{i\theta}=1$.  By the triangle inequality, $R^{d+1}+R\geq 1$, with equality only when $Re^{i\theta}$ and $Re^{(d+1)i\theta}$ have the same argument.  But in this equality case, $Re^{(d+1)i\theta}+Re^{i\theta}$ has argument $\theta$, which contradicts this expression equaling $1$, so we can conclude that the inequality is strict.  Then $f_d(R)=R^{d+1}+R-1>0$, and the monotonicity of $f_d(z)$ on $(0, \infty)$ implies that $R>w_d$.  Hence, $w_d$ is the root with strictly smallest modulus.
\end{proof}

The next task is approximating $w_d$.  We preface the following lemma with a heuristic explanation.  When $d$ becomes large, $w_d^{d+1}$ is very small, and we expect $w_d$ to approach $1$.  Thus, we write $w_d=1-\frac{\varepsilon(d)}{d+1}$ and note that $w_d^{d+1}=(1-\frac{\varepsilon(d)}{d+1})^{d+1}\approx e^{-\varepsilon(d)}$ for large $d$.  Plugging this approximation into $f_d(w_d)=0$ gives $e^{-\varepsilon(d)}+(1-\frac{\varepsilon(d)}{d+1})-1\approx 0$ and hence $d+1 \approx \varepsilon(d) e^{\varepsilon(d)}$.  Thus, we expect $\varepsilon(d)$ to be close to the real branch evaluation of $W(d+1)$, where the Lambert W-function $W(z)$ is the inverse of the map $z \mapsto ze^z$.  It is well known (see, e.g., \cite{lambert}) that $W(z)\approx \log(z)-\log\log(z)$ for real $z\geq 3$.  Hence, we approximate $w_d$ by $w_d^{\ast}=1-\frac{\log(d+1)}{d+1}+\frac{\log\log(d+1)}{d+1}$.  We now show that this is in fact a good approximation.

\begin{lemma}
For any positive integer $d$, we have the bounds
$$w_d=1-\frac{\log(d+1)}{d+1}+O\left(\frac{\log\log(d+1)}{d+1}\right).$$
\label{wdbounds}
\end{lemma}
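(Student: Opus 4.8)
```latex
The plan is to make rigorous the heuristic already sketched in the paper, turning the Lambert-$W$ approximation into a two-sided bound on $w_d$. The cleanest route avoids the Lambert-$W$ function entirely: since we only need to pin down $w_d$ to within $O\!\left(\frac{\log\log(d+1)}{d+1}\right)$, it suffices to show that plugging the candidate approximations into $f_d$ produces values of the correct sign and then invoke the monotonicity of $f_d$ on $(0,\infty)$ established in Lemma \ref{posroot}. Concretely, I would fix a constant $C$ and define the test points
$$
w^- = 1 - \frac{\log(d+1)}{d+1} - C\,\frac{\log\log(d+1)}{d+1}, \qquad
w^+ = 1 - \frac{\log(d+1)}{d+1} + C\,\frac{\log\log(d+1)}{d+1},
$$
with the aim of proving $f_d(w^-) < 0 < f_d(w^+)$ for all sufficiently large $d$. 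Because $f_d'(z) = (d+1)z^d + 1 > 0$ on $(0,\infty)$, this sandwiches the unique positive root: $w^- < w_d < w^+$, which is exactly the claimed estimate.

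The main computation is estimating $f_d(w^{\pm}) = (w^{\pm})^{d+1} + w^{\pm} - 1$. First I would handle the term $(w^{\pm})^{d+1}$. Writing $w^{\pm} = 1 - \frac{\alpha^{\pm}}{d+1}$ with $\alpha^{\pm} = \log(d+1) \mp C\log\log(d+1)$, I would use the expansion $\log\!\bigl(1 - \frac{\alpha}{d+1}\bigr) = -\frac{\alpha}{d+1} - \frac{\alpha^2}{2(d+1)^2} + O\!\bigl(\frac{\alpha^3}{(d+1)^3}\bigr)$ to get
$$
(w^{\pm})^{d+1} = \exp\!\left((d+1)\log\!\Bigl(1 - \tfrac{\alpha^{\pm}}{d+1}\Bigr)\right)
= e^{-\alpha^{\pm}}\exp\!\left(-\tfrac{(\alpha^{\pm})^2}{2(d+1)} + O\!\bigl(\tfrac{(\alpha^{\pm})^3}{(d+1)^2}\bigr)\right).
$$
Since $\alpha^{\pm} \sim \log(d+1)$, the correction factor is $1 + O\!\bigl(\frac{(\log(d+1))^2}{d+1}\bigr)$, and $e^{-\alpha^{\pm}} = \frac{1}{d+1}\,(\log(d+1))^{\pm C}$. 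Meanwhile $w^{\pm} - 1 = -\frac{\alpha^{\pm}}{d+1}$ exactly. Collecting terms, the dominant balance in $f_d(w^{\pm})$ is between the $\pm C \log\log(d+1)/(d+1)$ contribution coming from $w^{\pm}-1$ and the $\pm C$-power of $\log(d+1)$ appearing in $e^{-\alpha^{\pm}}$; the point is that for $w^+$ the exponential term $\frac{(\log(d+1))^{C}}{d+1}$ dominates and forces $f_d(w^+) > 0$, while for $w^-$ the linear term $-\frac{C\log\log(d+1)}{d+1}$ is not quite overcome by the now-shrunken exponential $\frac{(\log(d+1))^{-C}}{d+1}$, forcing $f_d(w^-) < 0$, provided $C$ is chosen large enough (and $d$ large).

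The step I expect to require the most care is the sign bookkeeping: one must verify that, after dividing through by $\frac{1}{d+1}$, the leading discrepancy between the exponential term and the linear term has the desired sign for a single fixed choice of $C$ valid at \emph{both} endpoints simultaneously. This is delicate because the exponential term behaves like $(\log(d+1))^{\pm C}$ while the error we are trying to beat behaves like $\log\log(d+1)$, so one is comparing a power of a logarithm against a log-logarithm, and the constant $C$ must be tuned so that the comparison goes the right way at each endpoint. I would carry out this comparison by factoring $\frac{1}{d+1}$ out of $f_d(w^{\pm})$ and showing the bracketed remainder tends to $+\infty$ (for $w^+$) and $-\infty$ (for $w^-$); this avoids tracking exact constants and reduces everything to the elementary fact that $(\log m)^{C}$ grows faster than $C\log\log m$. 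Finally, since the statement only claims an $O$-bound rather than an asymptotic equality, it suffices to establish the bounds for all $d$ beyond some threshold $d_0$ and then absorb the finitely many remaining cases into the implied constant, which is routine.
```
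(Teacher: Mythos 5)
Your proposal is correct, but it closes the argument by a different mechanism than the paper. Both proofs share the same core computation---writing a test point as $1-\frac{\alpha}{d+1}$ and estimating $\left(1-\frac{\alpha}{d+1}\right)^{d+1}=e^{-\alpha}\left(1+O\!\left(\frac{\alpha^2}{d+1}\right)\right)$---but they convert this into a bound on $w_d$ differently. The paper uses a \emph{single} test point $w_d^{\ast}=1-\frac{\log(d+1)}{d+1}+\frac{\log\log(d+1)}{d+1}$ (the Lambert-$W$ approximation, including the $\log\log$ correction), shows $|f_d(w_d^{\ast})|=O\!\left(\frac{\log\log(d+1)}{d+1}\right)$, and then invokes the quantitative derivative bound $f_d'(x)>1$ on $(0,\infty)$ (from the proof of Lemma \ref{posroot}) to conclude $|w_d-w_d^{\ast}|<|f_d(w_d)-f_d(w_d^{\ast})|=|f_d(w_d^{\ast})|$; no sign analysis or tunable constant is needed. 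Your two-point sandwich instead uses monotonicity only qualitatively, to trap the unique positive root between a sign change; this is slightly more elementary (it never needs $f_d'>1$, only $f_d'>0$), but it costs two evaluations and the tuning of $C$. One imprecision in your sign bookkeeping is worth fixing: at $w^+$ the decisive comparison is not ``$(\log(d+1))^C$ versus $\log\log(d+1)$'' but $(\log(d+1))^C$ versus the full linear term $\log(d+1)-C\log\log(d+1)$, so you need $C\geq 1$ (any fixed $C>1$ works comfortably); at $w^-$ the bracket is dominated by the term $-\log(d+1)$, so any $C>0$ works there, and a single choice of $C$ serves both endpoints. With that correction, your plan---showing the bracketed quantity tends to $+\infty$ at $w^+$ and to $-\infty$ at $w^-$---goes through, and absorbing the finitely many small values of $d$ into the implied constant is indeed routine.
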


\begin{proof}
First, we compute
\begin{align*}
\log\left((w_d^{\ast})^{d+1}\right) &=(d+1)\log\left(1-\frac{\log(d+1)}{d+1}+\frac{\log\log(d+1)}{d+1}\right)\\
 &=(d+1)\left(-\frac{\log(d+1)}{d+1}+\frac{\log\log(d+1)}{d+1}+O\left(\frac{(\log(d+1)-\log\log(d+1))^2}{(d+1)^2}\right)\right)\\
 &=-\log(d+1)+\log\log(d+1)+o((d+1)^{c-1})
\end{align*}
for any small $c>0$.  The second equality comes from the Taylor expansion of $\log(1+x)$.  Exponentiating gives
$$(w_d^{\ast})^{d+1}=\frac{\log(d+1)}{d+1}(1+o((d+1)^{c-1}))=\frac{\log(d+1)}{d+1}+o((d+1)^{2c-2}).$$
We now simply plug $w_d^{\ast}$ into $f_d(x)$:
\begin{align*}
f_d(w_d^{\ast})&=\frac{\log(d+1)}{d+1}+o((d+1)^{2c-2})+\left(1-\frac{\log(d+1)}{d+1}+\frac{\log\log(d+1)}{d+1}\right)-1\\
 &=O\left(\frac{\log\log(d+1)}{d+1}\right)
\end{align*}
Recall from the proof of Lemma \ref{posroot} that $f_d'(x)>1$ for all positive $x$.  Hence,
$$|w_d-w_d^{\ast}|<|f_d(w_d)-f_d(w_d^{\ast})|=|f_d(w_d^{\ast})|=O\left(\frac{\log\log(d+1)}{d+1}\right).$$
Finally, we conclude that
$$w_d=1-\frac{\log(d+1)}{d+1}+O\left(\frac{\log\log(d+1)}{d+1}\right).$$
\end{proof}

We now transition into the complex analysis portion of this section.

\begin{proposition}
For any positive integer $d$ and any root $w$ of $f_d(z)$, the residue of $G_{d,1}(z)$ at $z=w$ is given by
$$\Res_{w}\left(G_{d,1}\right)=-\frac{w^3}{(1-w)(d+1-dw)}.$$
\label{residue}
\end{proposition}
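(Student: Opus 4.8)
The plan is to identify $z=w$ as a simple pole of $G_{d,1}$ and to apply the standard quotient formula for residues at a simple pole. Writing $G_{d,1}(z)=P(z)/D(z)$ with $P(z)=z(1-z^{d+1})$ and $D(z)=(1-z-z^{d+1})(1-z)$, the key observation is that $1-z-z^{d+1}=-f_d(z)$, so $D$ vanishes at every root $w$ of $f_d$. Lemma \ref{distinctroots} tells us these roots are simple and that $P(w)\neq 0$ (since $f_d$ and $z(1-z^{d+1})$ share no roots), while $f_d(1)=1\neq 0$ forces $w\neq 1$, so the factor $1-z$ does not contribute to the vanishing of $D$. Hence $z=w$ is genuinely a simple pole, and I would invoke the formula $\Res_w(G_{d,1})=P(w)/D'(w)$.

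The next step is to compute $D'(w)$ by the product rule. Differentiating gives $D'(z)=(-1-(d+1)z^d)(1-z)+(1-z-z^{d+1})(-1)$, and because $1-w-w^{d+1}=0$ the second summand drops out at $z=w$, leaving
$$D'(w)=(-1-(d+1)w^d)(1-w).$$
So at this stage $\Res_w(G_{d,1})=\dfrac{w(1-w^{d+1})}{(-1-(d+1)w^d)(1-w)}$, and it remains only to put this into the claimed closed form.

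The main (and really only) work is the simplification, which rests entirely on the defining relation $w^{d+1}=1-w$. From this I get $1-w^{d+1}=w$, so the numerator becomes $P(w)=w\cdot w=w^2$, and $w^d=(1-w)/w$, which I substitute into $D'(w)$. The one place to be careful is collapsing $-1-(d+1)w^d$; clearing the denominator $w$ yields $-1-(d+1)\tfrac{1-w}{w}=-\tfrac{(d+1)-dw}{w}$, so that $D'(w)=-\tfrac{(d+1-dw)(1-w)}{w}$. Dividing $P(w)=w^2$ by this expression produces $-\dfrac{w^3}{(1-w)(d+1-dw)}$, exactly as stated. There is no deeper obstacle here: the entire argument reduces to the single substitution $w^{d+1}=1-w$ together with routine algebra, and the structural facts needed to justify the simple-pole residue formula are already supplied by Lemma \ref{distinctroots}.
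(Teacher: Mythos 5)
Your proof is correct and follows essentially the same route as the paper: the paper isolates the factor $\frac{w(1-w^{d+1})}{1-w}$ and evaluates $\lim_{z\to w}\frac{z-w}{1-z-z^{d+1}}$ by L'Hospital, which is exactly your $P(w)/D'(w)$ computation in slightly different packaging, followed by the same simplification via $w^{d+1}=1-w$. Your explicit justification of the simple-pole structure via Lemma \ref{distinctroots} and $w\neq 1$ is a welcome (if minor) tightening of what the paper leaves implicit.
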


\begin{proof}
The computation is straightforward:
\begin{align*}
\Res_{w}\left(G_{d,1}\right)&=\frac{w(1-w^{d+1})}{1-w} \lim_{z \to w} \frac{z-w}{1-z-z^{d+1}}\\
 &=\frac{w(1-w^{d+1})}{1-w} \lim_{z \to w} \frac{1}{-1-(d+1)z^d}\\
 &=-\frac{w(1-w^{d+1})}{(1-w)(1+(d+1)w^d)}\\
 &=-\frac{w^3}{(1-w)(d+1-dw)}.
\end{align*}
The second equality follows from L'Hospital's Rule, and the third equality holds because $w$ is not a root of $(d+1)z^d+1$ (from Lemma \ref{distinctroots}).  The fourth equality uses $w^{d+1}+w-1=0$ to substitute for $w^{d+1}$ and $w^d$.
\end{proof}

We can finally prove Lemma \ref{r=1asymptotic}.

\begin{proof}[Proof of Lemma \ref{r=1asymptotic}]
As is well-known, the asymptotics of the coefficients of $G_{d,1}(z)$ are controlled by the pole with the smallest modulus.  So, for each fixed $d$,
$$N_{d,1}(s) \sim_s -Res_{w_d}(G_{d,1})\left(\frac{1}{w_d}\right)^s=\frac{w_d^3}{(1-w_d)(d+1-dw_d)} \left(\frac{1}{w_d}\right)^s.$$
\end{proof}

\vspace{0.5cm}

\textbf{Acknowledgments.} This research was conducted at the University of Minnesota, Duluth REU and was supported by NSF/DMS grant 1650947 and NSA grant H98230-18-1-0010.  The author wishes to thank Joe Gallian for suggesting this problem and Sam Judge, Aaron Berger, and Joe Gallian for reading paper drafts.  The author also benefited from discussions with Mitchell Lee and Ashwin Sah.  Finally, the author is grateful to Alan Peng for pointing out a subtlety in the $d=1$ case of Theorem~\ref{r>dformula}.

\end{document}